%%%%%%%%%%%%%%%%%%%%%%%%%%%%%%%%%%%%%%%%%%%%%%%%%%%%%%%%%%%%%%%%%%%%%%%%%%%%%%%%
%2345678901234567890123456789012345678901234567890123456789012345678901234567890
%        1         2         3         4         5         6         7         8
%\documentclass[10pt,onecolumn]{IEEEtran}
\documentclass[letterpaper, 10pt, conference]{IEEEtran}  % Comment this line out
                                                          % if you need a4paper
%\documentclass[letterpaper, 9pt, conference]{ieeeconf}

\IEEEoverridecommandlockouts                              % This command is only
                                                          % needed if you want to
                                                          % use the \thanks command
%\overrideIEEEmargins
% See the \addtolength command later in the file to balance the column lengths
% on the last page of the document

% The following packages can be found on http:\\www.ctan.org
\usepackage{epsfig}
\usepackage{amsmath}
\usepackage{amssymb}
\usepackage{indentfirst}
\usepackage{cite}
\usepackage{bm}
\usepackage{graphicx}           % for including graphics
\usepackage{psfrag}             % for changing text in eps graphics
\usepackage{subfigure}          % for subfigures
\usepackage{empheq}
\usepackage{enumerate}

%\newenvironment{proof}[1][Proof]%
%  {\bigskip\par\noindent\textbf{#1 :\ }}%
%  {\hspace*{\fill} \rule{6pt}{6pt}\bigskip}
%\newenvironment{proof*}[1][Proof]%
%  {\bigskip\par\noindent\textbf{#1 :\ }}%
%  {\bigskip}
\usepackage{amsthm}

\usepackage{tikz}
\usetikzlibrary{arrows}

\newenvironment{proof}[1][Proof]%
  {\smallskip\par\noindent\textbf{#1\,:\ }}%
  {\hspace*{\fill} \rule{6pt}{6pt}\smallskip}
\newenvironment{proof*}[1][Proof]%
  {\medskip\par\noindent\textbf{#1\,:\ }}%
  %{\bigskip}

\newtheorem{remark}{\textbf{Remark}}

\newtheorem{assumption}{Assumption}
\newtheorem{theorem}{Theorem}
\newtheorem{lemma}{Lemma}

\newtheorem{proposition}{Proposition}

\usepackage{color}
\definecolor{gray}{RGB}{128,128,128}

\usepackage{hyperref}

\title{Distributed Stochastic Gradient Method for Non-Convex Problems with Applications in Supervised Learning}

\author{J. George, T. Yang, H. Bai and P. Gurram% <-this % stops a space
%\thanks{*This work was not supported by any organization}% <-this % stops a space
%\thanks{,%        {\tt\small x}}%
\thanks{J. George is with U.S. Army Research Laboratory, Adelphi, MD 20783, USA.
{\tt\small jemin.george.civ@mail.mil}}%
\thanks{T.~Yang is with University of North Texas, Denton, TX 76203 USA.
{\tt\small Tao.Yang@unt.edu}}%
\thanks{He~Bai is with Oklahoma State University, Stillwater, OK 74078, USA.
{\tt\small he.bai@okstate.edu}}%
\thanks{P. Gurram is with Booz Allen Hamilton \& U.S. Army Research Laboratory, Adelphi, MD 20783, USA.
{\tt\small Gurram\_Prudhvi@bah.com}}%
}

\allowdisplaybreaks[4]

\begin{document}

\maketitle
\thispagestyle{empty}
\pagestyle{empty}

%%%%%%%%%%%%%%%%%%%%%%%%%%%%%%%%%%%%%%%%%%%%%%%%%%%%%%%%%%%%%%%%%%%%%%%%%%%%%%%%
\begin{abstract}
We develop a distributed stochastic gradient descent algorithm for solving non-convex optimization problems under the assumption that the local objective functions are twice continuously differentiable with Lipschitz continuous gradients and Hessians. We provide sufficient conditions on step-sizes that guarantee the asymptotic mean-square convergence of the proposed algorithm. We apply the developed algorithm to a distributed supervised-learning problem, in which a set of networked agents collaboratively train their individual neural nets to recognize handwritten digits in images. Results indicate that all agents report similar performance that is also comparable to the performance of a centrally trained neural net. Numerical results also show that the proposed distributed algorithm allows the individual agents to recognize the digits even though the training data corresponding to all the digits is not locally available to each agent.
\end{abstract}
%%%%%%%%%%%%%%%%%%%%%%%%%%%%%%%%%%%%%%%%%%%%%%%%%%%%%%%%%%%%%%%%%%%%%%%%%%%%%%%%

\section{Introduction}\label{sec:Intro}

With the advent of smart devices, there has been an exponential growth in the amount of data collected and stored locally on the individual devices. Applying machine learning to extract value from such massive data to provide data-driven insights, decisions, and predictions has been a hot research topic as well as the focus of numerous businesses like Google, Facebook, Alibaba, Yahoo, etc. However, porting these vast amounts of data to a data center to conduct traditional machine learning has raised two main issues: (i) the communication challenge associated with transferring vast amounts of data from a large number of devices to a central location and (ii) the privacy issues associated with sharing raw data. Distributed machine learning techniques based on the server-client architecture~\cite{Mu2014,ICCCN2017} have been proposed as solutions to this problem. On one extreme end of this architecture, we have the \emph{parameter server} approach, where a server or group of servers initiate distributed learning by pushing the current model to a set of client nodes that host the data. Client nodes compute the local gradients or parameter updates and communicate it to the server nodes. Server nodes aggregate these values and update the current model~\cite{Zhang2018, NIPS2014_5597}. On the other extreme, we have \emph{federated learning}, where each client node obtains a local solution to the learning problem and the server node computes a global model by simply averaging the local models~\cite{Jakub2016, McMahan2017}. These distributed learning techniques are not truly distributed since they follow a master-slave architecture and do not involve any peer-to-peer communication. Though these techniques are not always robust and they are rendered useless if the server fails, they do provide a good business opportunity for companies that own servers and host web services. However, our aim is to develop a fully distributed machine learning architecture enabled by client-to-client interaction.

For large-scale machine learning, stochastic gradient descent (SGD) methods are often preferred over batch gradient methods~\cite{Bottou2018SIAM} because (i) in many large-scale problems, there is a good deal of redundancy in data and therefore it is inefficient to use all the data in every optimization iteration, (ii) the computational cost involved in computing the batch gradient is much higher than that of the stochastic gradient, and (iii) stochastic methods are more suitable for online learning where data are arriving sequentially. Since most machine learning problems are non-convex, there is a need for distributed stochastic gradient methods for non-convex problems. Therefore, here we present a distributed stochastic gradient algorithm for non-convex problems and demonstrate its utility for distributed machine learning.

A few early examples of (non-stochastic or deterministic) distributed non-convex optimization algorithms include the Distributed Approximate Dual Subgradient (DADS) Algorithm \cite{Zhu2013}, NonconvEx primal-dual SpliTTing (NESTT) algorithm \cite{NESTT2016}, and the Proximal Primal-Dual Algorithm (Prox-PDA) \cite{pmlr-v70-hong17a}. More recently, a non-convex version of the accelerated distributed augmented Lagrangians (ADAL) algorithm is presented in \cite{Chatzipanagiotis2017} and successive convex approximation (SCA)-based algorithms such as iNner cOnVex Approximation (NOVA) and in-Network succEssive conveX approximaTion algorithm (NEXT) are given in \cite{Scutari2017} and \cite{Lorenzo2016}, respectively. References \cite{Hong2018, Guo2017, Hong2016SIAM} provide several distributed alternating direction method of multipliers (ADMM) based non-convex optimization algorithms. Non-convex versions of Decentralized Gradient Descent (DGD) and Proximal Decentralized Gradient Descent (Prox-DGD) are given in \cite{Zeng2018TSP}. Finally, Zeroth-Order NonconvEx (ZONE) optimization algorithms for mesh network (ZONE-M) and star network (ZONE-S) are presented in \cite{Hajinezhad2019}.

There exist several works on distributed stochastic gradient methods, but mainly for strongly convex optimization problems. These include the stochastic subgradient-push method for distributed optimization over time-varying directed graphs  given in \cite{Nedic2016}, distributed stochastic optimization over random networks given in \cite{2018arXiv180307836J}, the Stochastic Unbiased Curvature-aided Gradient (SUCAG) method given in \cite{SUCAG}, and distributed stochastic gradient tracking methods \cite{Pu2018ArxiV}. There are very few works on distributed stochastic gradient methods for non-convex optimization~\cite{Tatarenko2017,Bianchi2013}; however, they make very restrictive assumptions on the critical points of the problem.

Contributions of this paper are three-fold:
\begin{enumerate}
  \item We propose a fully distributed machine learning architecture that does not require any server nodes.
  \item We develop a distributed SGD algorithm and provide sufficient conditions on step-sizes such that the algorithm is mean-square convergent.
  \item We demonstrate the utility of the proposed SGD algorithm for distributed machine learning.
\end{enumerate}

\subsection{Notation}

Let $\mathbb{R}^{n\times m}$ denote the set of $n\times m$ real matrices. For a vector $\bm{\phi}$, $\phi_i$ is the $i^{\text{th}}$ entry of $\bm{\phi}$.  An $n\times n$ identity matrix is denoted as $I_n$ and $\mathbf{1}_n$ denotes an $n$-dimensional vector of all ones. For $p\in[1,\,\infty]$, the $p$-norm of a vector $\mathbf{x}$ is denoted as $\left\| \mathbf{x} \right\|_p$. For matrices $A \in \mathbb{R}^{m\times n}$ and $B \in \mathbb{R}^{p \times q}$, $A \otimes B \in \mathbb{R}^{mp \times nq}$ denotes their Kronecker product.

For a graph $\mathcal{G}\left(\mathcal{V},\mathcal{E}\right)$ of order $n$, $\mathcal{V} \triangleq \left\{v_1, \ldots, v_n\right\}$ represents the agents or nodes and the communication links between the agents are represented as $\mathcal{E} \triangleq \left\{e_1, \ldots, e_{\ell}\right\} \subseteq \mathcal{V} \times \mathcal{V}$. Let $\mathcal{A} = \left[a_{ij}\right]\in \mathbb{R}^{n\times n}$ be the \emph{adjacency matrix} with entries of $a_{ij} = 1 $ if $(v_i,v_j)\in\mathcal{E}$ and zero otherwise. Define $\Delta = \text{diag}\left(\mathcal{A}\mathbf{1}_n\right)$ as the in-degree matrix and $\mathcal{L} = \Delta - \mathcal{A}$ as the graph \emph{Laplacian}. 
\section{Distributed Machine Learning}\label{sec:Problem}

Our problem formulation closely follows the centralized machine learning problem discussed in \cite{Bottou2018SIAM}. Consider a networked set of $n$ agents, each with a set of $m_i$, $i=1,\ldots,n$, independently drawn input-output samples $\{\bm{x}_i^{j},\,\bm{y}_i^j\}_{j=1}^{j=m_i}$, where $\bm{x}_i^j\in\mathbb{R}^{d_x}$ and $\bm{y}_i^j\in\mathbb{R}^{d_y}$ are the $j$-th input and output data, respectively, associated with the $i$-th agent. For example, the input data could be images and the outputs could be labels. Let $h\left(\cdot\,;\,\cdot\right):\mathbb{R}^{d_x}\times\mathbb{R}^{d_w} \mapsto \mathbb{R}^{d_y}$, denote the prediction function, fully parameterized by the vector $\bm{w}\in\mathbb{R}^{d_w}$. Each agent aims to find the parameter vector that minimizes the losses, $\ell\left(\cdot\,;\,\cdot\right):\mathbb{R}^{d_y}\times\mathbb{R}^{d_y} \mapsto \mathbb{R}$, incurred from inaccurate predictions. Thus, the loss function $\ell\left(h\left(\bm{x}_i;\bm{w}\right),\bm{y}_i\right)$ yields the loss incurred by the $i$-th agent, where $h\left(\bm{x}_i;\bm{w}\right)$ and $\bm{y}_i$ are the predicted and true outputs, respectively for the $i$-th node.

%\begin{remark}
%  Note that the functional form of the prediction function $h\left(\cdot\,;\,\cdot\right)$ and the loss function $\ell\left(\cdot\,;\,\cdot\right)$ are common across all agents since the learning mechanism is identical across all agents, i.e., all agents have the same neural architecture.
%\end{remark}

Assuming the input output space $\mathbb{R}^{d_x}\times\mathbb{R}^{d_y}$ associated with the $i$-th agent is endowed with a probability measure $P_i~:~\mathbb{R}^{d_x}\times\mathbb{R}^{d_y}\mapsto[0,\,1]$, the objective function an agent wishes to minimize is
\begin{align}\label{Eq:Risk}
\begin{split}
R_i(\bm{w}) &= \int_{\mathbb{R}^{d_x}\times\mathbb{R}^{d_y}}\, \ell\left(h\left(\bm{x}_i;\bm{w}\right),\bm{y}_i\right)\,dP_i\left(\bm{x}_i,\bm{y}_i\right) \\
&= \mathbb{E}_{P_i} \left[ \ell\left(h\left(\bm{x}_i;\bm{w}\right),\bm{y}_i\right) \right].
\end{split}
\end{align}
Here $R_i(\bm{w})$ denotes the expected risk given a parameter vector $\bm{w}$ with respect to the probability distribution $P_i$. The total expected risk across all networked agents is given as
\begin{equation}\label{Eq:TotalRisk}
R(\bm{w}) = \sum_{i=1}^{n}\,R_i(\bm{w}) = \sum_{i=1}^{n}\, \mathbb{E}_{P_i} \left[ \ell\left(h\left(\bm{x}_i;\bm{w}\right),\bm{y}_i\right) \right].
\end{equation}
Minimizing the expected risk is desirable but often unattainable since the distributions $P_i$ are unknown. Thus, in practice each agent chooses to minimize the empirical risk $\bar{R}_i(\bm{w})$ defined as
\begin{equation}\label{Eq:Risk1}
\bar{R}_i(\bm{w}) = \frac{1}{m_i}\sum_{j=1}^{m_i} \ell\left(h\left(\bm{x}_i^j;\bm{w}\right),\bm{y}_i^j\right).
\end{equation}
Here, the assumption is that $m_i$ is large enough so that $\bar{R}_i(\bm{w}) \approx R_i(\bm{w})$. The total empirical risk across all networked agents is 
\begin{equation}\label{Eq:TotalRisk1}
\bar{R}(\bm{w}) = \sum_{i=1}^{n}\,\bar{R}_i(\bm{w}) = \sum_{i=1}^{n}\, \left[ \frac{1}{m_i}\sum_{j=1}^{m_i} \ell\left(h\left(\bm{x}_i^j;\bm{w}\right),\bm{y}_i^j\right) \right].
\end{equation}

In order to simplify the notation, let us represent a sample input-output pair $(\bm{x}_i,\,\bm{y}_i)$ by a random seed $\bm{\xi}_i$ and let $\bm{\xi}_i^j$ denotes the $j$-th sample associated with the $i$-th agent. Define the loss incurred for a given $\left(\bm{w},\bm{\xi}_i^j\right)$ as $\ell\left(\bm{w},\bm{\xi}_i^j\right)$. Now, the distributed learning problem can be posed as an optimization involving sum of local empirical risks, i.e.,
\begin{align}\label{eq:Opt1}
  \min_{\bm{w}}\, f(\bm{w}) = \min_{\bm{w}}\,\sum_{i=1}^{n} \, f_i\left( \bm{w} \right),
\end{align}
where $f_i\left( \bm{w} \right) = \frac{1}{m_i}\sum_{j=1}^{m_i} \ell\left(\bm{w},\bm{\xi}_i^{j}\right)$.
%\begin{equation}\label{Eq:Risk2}
%f_i\left( \bm{w} \right) = \frac{1}{m_i}\sum_{j=1}^{m_i} \ell\left(\bm{w},\bm{\xi}_i^{j}\right).
%\end{equation}

%@@@@@@@@@@@@@@@@@@@@@@@@@@@@@@@@@@@@@@@@@@@@@@@@@@@@@@@@@@@@@@@@@@@@@@@@@@@@@@@@@@@@@@@@@@@@@@@@@@@@@@@@@@
\section{Distributed SGD}\label{sec:SGD}
%@@@@@@@@@@@@@@@@@@@@@@@@@@@@@@@@@@@@@@@@@@@@@@@@@@@@@@@@@@@@@@@@@@@@@@@@@@@@@@@@@@@@@@@@@@@@@@@@@@@@@@@@@@

Here we propose a distributed stochastic gradient method to solve \eqref{eq:Opt1}. Let $\bm{w}_i(k) \in \mathbb{R}^{d_w}$ denote agent $i$'s estimate of the optimizer at time instant $k$. Thus, for an arbitrary initial condition $\bm{w}_i(0)$, the update rule at node $i$ is as follows:
\begin{align}\label{Eq:DSG}
\begin{split}
  \bm{w}_i(k + 1) = \bm{w}_i(k) &- \beta_k \, \sum_{j=1}^{n}\,a_{ij} \left( \bm{w}_i(k) - \bm{w}_j(k) \right) \\
  &- \alpha_k \,\mathbf{ g}_i\left( \bm{w}_i(k), \bm{\xi}_i(k) \right),
\end{split}
\end{align}
where $\alpha_k$ and $\beta_k$ are hyper parameters to be specified, $a_{ij}$ are the entries of the adjacency matrix and $\mathbf{g}_i\left( \bm{w}_i(k), \bm{\xi}_i(k) \right)$ represents either a simple stochastic gradient, mini-batch stochastic gradient or a stochastic quasi-Newton direction, i.e.,
\begin{align}\label{Eq:g}
  \mathbf{g}_i\left( \bm{w}_i(k), \bm{\xi}_i(k) \right) = \left\{
                                              \begin{array}{l}
                                                \nabla \ell\left(\bm{w}_i(k),\bm{\xi}_i^{k}\right), ~~\mbox{or} \\
                                                \frac{1}{n_i(k)}\sum\limits_{s=1}^{n_i(k)}\,\nabla \ell\left(\bm{w}_i(k),\bm{\xi}_i^{k,s}\right), ~~\mbox{or} \\
                                                H_i(k) \frac{1}{n_i(k)}\sum\limits_{s=1}^{n_i(k)}\,\nabla \ell\left(\bm{w}_i(k),\bm{\xi}_i^{k,s}\right),
                                              \end{array}
                                            \right.
\end{align}
\noindent where $n_i(k)$ denotes the mini-batch size, $H_i(k)$ is a positive definite scaling matrix, $\bm{\xi}_i^{k}$ represents the single random input-output pair sampled at time instant $k$, and $(\bm{\xi}_i^{k,s})$ denotes the $s$-th input-output pair out of the $n_i(k)$ random input-output pairs sampled at time instant $k$.

Define $\mathbf{w}(k) \triangleq \begin{bmatrix} \bm{w}_1^\top(k) & \ldots & \bm{w}_n^\top(k) \end{bmatrix}^\top \in \mathbb{R}^{nd_w}$. Now \eqref{Eq:DSG} can be written as
\begin{align}\label{Eq:DSG1}
  \mathbf{w}(k + 1) &= \left( \mathcal{W}_k \otimes I_{d_w}\right) \,\mathbf{w}(k) - \alpha_k \, \mathbf{g}(\mathbf{w}(k),\bm{\xi}(k)),
\end{align}
where $\mathcal{W}_k = \left(I_{n}-\beta_k\mathcal{L}\right)$, $\mathcal{L}$ is the network Laplacian and
$$\mathbf{g}(\mathbf{w}(k),\bm{\xi}(k)) \triangleq \begin{bmatrix}
                             \mathbf{g}_1\left( \bm{w}_1(k), \bm{\xi}_1(k) \right) \\
                             \vdots \\
                             \mathbf{g}_n\left( \bm{w}_n(k), \bm{\xi}_n(k) \right)
                           \end{bmatrix} \in \mathbb{R}^{n d_w}.$$

%@@@@@@@@@@@@@@@@@@@@@@@@@@@@@@@@@@@@@@@@@@@@@@@@@@@@@@@@@@@@@@@@@@@@@@@@@@@@@@@@@@@@@@@@@@@@@@@@@@@@@@@@@@
\subsection{Assumptions}
%@@@@@@@@@@@@@@@@@@@@@@@@@@@@@@@@@@@@@@@@@@@@@@@@@@@@@@@@@@@@@@@@@@@@@@@@@@@@@@@@@@@@@@@@@@@@@@@@@@@@@@@@@@

First, we state the following assumption on the individual objective functions:
%-------------------------------------------------------------------------------------------------------------------------------
\begin{assumption}\label{Assump:Lipz}
  Objective functions $f_i(\,\cdot\,)$ and its gradients $\nabla f_i(\,\cdot\,)$ $:\mathbb{R}^{d_w}\mapsto \mathbb{R}^{d_w}$ are Lipschitz continuous with Lipschitz constants $L^0_i > 0$ and $L_i > 0$, respectively, i.e., $\forall \,\bm{w}_a,\,\bm{w}_b\in\mathbb{R}^{d_w}, \, i=1,\ldots,n$, we have
\begin{align}\label{Eq:Lipz}
  \| f_i(\,\bm{w}_a\,) -  f_i(\,\bm{w}_b\,) \|_2 &\leq L_i^0 \|\bm{w}_a-\bm{w}_b\|_2\\
  \| \nabla f_i(\,\bm{w}_a\,) - \nabla f_i(\,\bm{w}_b\,) \|_2 &\leq L_i \|\bm{w}_a-\bm{w}_b\|_2.
\end{align}
\end{assumption}
%-------------------------------------------------------------------------------------------------------------------------------
%%-------------------------------------------------------------------------------------------------------------------------------
%\begin{assumption}\label{Assump:Lipz}
%  Objective functions $f_i(\,\cdot\,)$ are Lipschitz continuous with Lipschitz constants $L^0_i > 0$. Furthermore, the gradients $\nabla f_i(\,\cdot\,)$ $:\mathbb{R}^{d_w}\mapsto \mathbb{R}^{d_w}$ and the Hessians $\nabla^2 f_i(\,\cdot\,)$ $:\mathbb{R}^{d_w}\mapsto \mathbb{R}^{d_w\times d_w}$ are Lipschitz continuous with Lipschitz constants $L_i > 0$ and $L_{H_i}$, respectively, i.e., $\forall \,\bm{w}_a,\,\bm{w}_b\in\mathbb{R}^{d_w}, \, i=1,\ldots,n$, we have
%\begin{align}\label{Eq:Lipz}
%  \| f_i(\,\bm{w}_a\,) -  f_i(\,\bm{w}_b\,) \|_2 &\leq L_i^0 \|\bm{w}_a-\bm{w}_b\|_2\\
%  \| \nabla f_i(\,\bm{w}_a\,) - \nabla f_i(\,\bm{w}_b\,) \|_2 &\leq L_i \|\bm{w}_a-\bm{w}_b\|_2\\
%  \| \nabla^2 f_i(\,\bm{w}_a\,) - \nabla^2 f_i(\,\bm{w}_b\,) \|_2 &\leq L_{H_i} \|\bm{w}_a-\bm{w}_b\|_2.
%\end{align}
%\end{assumption}
%%-------------------------------------------------------------------------------------------------------------------------------
Now we introduce $F(\cdot):\mathbb{R}^{nd_w}\mapsto \mathbb{R}$, an aggregate objective function of local variables
\begin{align}\label{eq:obj}
   F(\mathbf{w}(k)) = \sum_{i=1}^{n} \, f_i\left( \bm{w}_i(k) \right).
\end{align}
Following Assumption \ref{Assump:Lipz}, the function $F(\cdot)$ is Lipschitz continuous with Lipschitz continuous gradient $\nabla F(\cdot)$, i.e., $\forall \,\mathbf{w}_a,\,\mathbf{w}_b\in\mathbb{R}^{n d_w}$, we have
\begin{equation}\label{Eq:Lipz1}
  \| \nabla F(\,\mathbf{w}_a\,) - \nabla F(\,\mathbf{w}_b\,) \|_2 \leq L \|\mathbf{w}_a-\mathbf{w}_b\|_2,
\end{equation}
with constant $L = \max\limits_i\{L_i\}$ and $\nabla F(\,\mathbf{w}\,) \triangleq \begin{bmatrix} \nabla f_1(\,\bm{w}_1\,)^\top &\ldots & \nabla f_n(\,\bm{w}_n\,)^\top \end{bmatrix}^\top \in \mathbb{R}^{n d_w}$.
%Furthermore, the Hessian is Lipschitz continuous, i.e.,
%\begin{equation}\label{Eq:Lipz1a}
%  \| \nabla^2 F(\,\mathbf{w}_a\,) - \nabla^2 F(\,\mathbf{w}_b\,) \|_2 \leq L_H \|\mathbf{w}_a-\mathbf{w}_b\|_2, \quad \forall \,\mathbf{w}_a,\,\mathbf{w}_b\in\mathbb{R}^{n d_w},
%\end{equation}
%with constant $L_H = \max\limits_i\{L_{H_i}\}$.
%~~~~~~~~~~~~~~~~~~~~~~~~~~~~~~~~~~~~~~~~~~~~~~~~~~~~~~~~~~~~~~~~~~~~~~~~~~~~~~~~~~~~~~~~~~~~~~~~~~~~~~~~~~~~~~~~~~~~~~~~~~~~~~~~~~
\begin{lemma}\label{Lemma:0}
  Given Assumption \ref{Assump:Lipz}, we have
\begin{align}\label{Eqn:lemma0}
  \| \nabla F\left(\mathbf{w}\right) \|_2 \leq \mu_F, \,\, \forall \mathbf{w}\in\mathbb{R}^{n d_w},
\end{align}
where $\mu_F < \infty$ is a positive constant.
\end{lemma}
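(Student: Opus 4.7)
The plan is to bound $\nabla F$ by extracting a uniform gradient bound on each $f_i$ from Assumption \ref{Assump:Lipz}, and then using the block structure of $\nabla F$ to combine these bounds.

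First, I would establish that the Lipschitz continuity of $f_i$ with constant $L_i^0$ (not its gradient, but $f_i$ itself) implies $\|\nabla f_i(\bm{w})\|_2 \leq L_i^0$ for every $\bm{w}\in\mathbb{R}^{d_w}$. The argument is a directional-derivative estimate: for any unit vector $\bm{v}\in\mathbb{R}^{d_w}$,
\begin{equation*}
  |\nabla f_i(\bm{w})^\top \bm{v}| = \lim_{t\to 0}\frac{|f_i(\bm{w}+t\bm{v})-f_i(\bm{w})|}{|t|} \leq L_i^0,
\end{equation*}
where the inequality uses \eqref{Eq:Lipz}. Taking the supremum over unit $\bm{v}$ (equivalently, choosing $\bm{v}=\nabla f_i(\bm{w})/\|\nabla f_i(\bm{w})\|_2$ when the gradient is nonzero) yields $\|\nabla f_i(\bm{w})\|_2 \leq L_i^0$.

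Next, I would assemble these per-agent bounds. Since $\nabla F(\mathbf{w})$ is the vertical concatenation of the blocks $\nabla f_i(\bm{w}_i)$, its squared $2$-norm decomposes as
\begin{equation*}
  \|\nabla F(\mathbf{w})\|_2^2 = \sum_{i=1}^{n}\|\nabla f_i(\bm{w}_i)\|_2^2 \leq \sum_{i=1}^{n}(L_i^0)^2.
\end{equation*}
Setting $\mu_F \triangleq \sqrt{\sum_{i=1}^{n}(L_i^0)^2}$, which is finite and positive under Assumption \ref{Assump:Lipz}, gives the claim.

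The argument is mostly routine; there is no real obstacle. The only subtle step is the first one, because Assumption \ref{Assump:Lipz} lists two Lipschitz conditions and one must be careful to invoke the one on $f_i$ itself (constant $L_i^0$) rather than the one on $\nabla f_i$ (constant $L_i$), since the latter only bounds the variation of $\nabla f_i$, not its magnitude. Once that is in place, the block-diagonal structure of $\nabla F$ does the rest.
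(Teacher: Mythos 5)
Your proof is correct and follows essentially the same route as the paper: the paper simply cites the standard fact (Lemma 3.3 of Khalil) that a Lipschitz function has a gradient bounded by its Lipschitz constant, which is exactly the directional-derivative estimate you carry out explicitly, and the block decomposition $\|\nabla F(\mathbf{w})\|_2^2=\sum_{i=1}^n\|\nabla f_i(\bm{w}_i)\|_2^2$ then gives $\mu_F=\sqrt{\sum_{i=1}^n (L_i^0)^2}$. Your remark about invoking the Lipschitz condition on $f_i$ itself (constant $L_i^0$) rather than on $\nabla f_i$ is exactly the right point of care.
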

%~~~~~~~~~~~~~~~~~~~~~~~~~~~~~~~~~~~~~~~~~~~~~~~~~~~~~~~~~~~~~~~~~~~~~~~~~~~~~~~~~~~~~~~~~~~~~~~~~~~~~~~~~~~~~~~~~~~~~~~~~~~~~~~~~~
\begin{proof}
  See Lemma 3.3 in~\cite{khalil2002nonlinear}.
\end{proof}
%~~~~~~~~~~~~~~~~~~~~~~~~~~~~~~~~~~~~~~~~~~~~~~~~~~~~~~~~~~~~~~~~~~~~~~~~~~~~~~~~~~~~~~~~~~~~~~~~~~~~~~~~~~~~~~~~~~~~~~~~~~~~~~~~~~
\begin{lemma}\label{Lemma:Lipz}
  Given Assumption \ref{Assump:Lipz}, we have $\forall \,\mathbf{w}_a,\,\mathbf{w}_b\in\mathbb{R}^{n d_w}$,
\begin{align}\label{Eqn:lemma1}
  F(\,\mathbf{w}_b\,)
&\leq F(\,\mathbf{w}_a\,) + \nabla F\left(\mathbf{w}_a\right)^\top(\mathbf{w}_b-\mathbf{w}_a) +
\frac{1}{2} L  \|\mathbf{w}_b-\mathbf{w}_a\|_2^2.
\end{align}
\end{lemma}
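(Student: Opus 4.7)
The statement is the classical descent lemma for functions whose gradient is Lipschitz continuous, so the plan is to reduce it to the one-dimensional fundamental theorem of calculus along the segment joining $\mathbf{w}_a$ and $\mathbf{w}_b$. Concretely, I would define the auxiliary function $\varphi(t) = F\bigl(\mathbf{w}_a + t(\mathbf{w}_b-\mathbf{w}_a)\bigr)$ for $t\in[0,1]$, observe that $\varphi$ is continuously differentiable because each $f_i$ is (Assumption 1 and the aggregation in \eqref{eq:obj}), and compute $\varphi'(t) = \nabla F\bigl(\mathbf{w}_a + t(\mathbf{w}_b-\mathbf{w}_a)\bigr)^\top (\mathbf{w}_b - \mathbf{w}_a)$ by the chain rule.

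Next I would use the fundamental theorem of calculus to write
\begin{equation*}
F(\mathbf{w}_b) - F(\mathbf{w}_a) = \varphi(1) - \varphi(0) = \int_0^1 \nabla F\bigl(\mathbf{w}_a + t(\mathbf{w}_b-\mathbf{w}_a)\bigr)^\top (\mathbf{w}_b - \mathbf{w}_a)\, dt,
\end{equation*}
and then add and subtract $\nabla F(\mathbf{w}_a)^\top(\mathbf{w}_b - \mathbf{w}_a)$ inside the integrand so that the residual takes the form
\begin{equation*}
F(\mathbf{w}_b) - F(\mathbf{w}_a) - \nabla F(\mathbf{w}_a)^\top(\mathbf{w}_b - \mathbf{w}_a) = \int_0^1 \bigl[\nabla F\bigl(\mathbf{w}_a + t(\mathbf{w}_b-\mathbf{w}_a)\bigr) - \nabla F(\mathbf{w}_a)\bigr]^\top (\mathbf{w}_b - \mathbf{w}_a)\, dt.
\end{equation*}

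To bound the right-hand side I would apply the Cauchy--Schwarz inequality inside the integral, then invoke the Lipschitz property \eqref{Eq:Lipz1} of $\nabla F$ (which is already established in the excerpt as a consequence of Assumption 1, with constant $L = \max_i L_i$) to get $\|\nabla F(\mathbf{w}_a + t(\mathbf{w}_b-\mathbf{w}_a)) - \nabla F(\mathbf{w}_a)\|_2 \leq L\,t\,\|\mathbf{w}_b-\mathbf{w}_a\|_2$. Substituting this bound and computing $\int_0^1 t\,dt = \tfrac12$ yields precisely the $\tfrac12 L \|\mathbf{w}_b-\mathbf{w}_a\|_2^2$ term required by \eqref{Eqn:lemma1}.

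There is no real obstacle here; the only point that needs a small amount of care is making sure the Lipschitz constant used is $L = \max_i L_i$ (as established in the excerpt for the aggregate gradient $\nabla F$) rather than the per-component constants $L_i$, and that the Cauchy--Schwarz step is applied pointwise in $t$ before integrating so the $t$ picked up from the Lipschitz bound integrates to $\tfrac12$. With that bookkeeping, the inequality follows directly.
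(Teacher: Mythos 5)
Your proof is correct and is essentially the argument the paper intends: the paper's proof is the one-line remark ``Proof follows from the mean value theorem,'' and your write-up is the standard integral-form version of that argument (fundamental theorem of calculus along the segment, Cauchy--Schwarz, the Lipschitz bound \eqref{Eq:Lipz1} with $L=\max_i L_i$, and $\int_0^1 t\,dt=\tfrac12$). No gaps; your version is simply more explicit than the paper's.
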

%~~~~~~~~~~~~~~~~~~~~~~~~~~~~~~~~~~~~~~~~~~~~~~~~~~~~~~~~~~~~~~~~~~~~~~~~~~~~~~~~~~~~~~~~~~~~~~~~~~~~~~~~~~~~~~~~~~~~~~~~~~~~~~~~~~
\begin{proof}
  Proof follows from the mean value theorem.
\end{proof}
%\begin{proof}
%Under Assumption \ref{Assump:Lipz}, one obtains
%\begin{align*}
% F(\,\mathbf{w}_b\,) &=F(\,\mathbf{w}_a\,) + \int_{0}^{1} \, \frac{\partial F\left(\mathbf{w}_a + t(\mathbf{w}_b-\mathbf{w}_a)\right)}{\partial t} \,dt \\
%&=F(\,\mathbf{w}_a\,) + \int_{0}^{1} \, \nabla F\left(\mathbf{w}_a + t(\mathbf{w}_b-\mathbf{w}_a)\right)^\top (\mathbf{w}_b-\mathbf{w}_a) \,dt, \\
%&=F(\,\mathbf{w}_a\,) + \nabla F\left(\mathbf{w}_a\right)^\top(\mathbf{w}_b-\mathbf{w}_a) + \int_{0}^{1} \, \left[ \nabla F\left(\mathbf{w}_a + t(\mathbf{w}_b-\mathbf{w}_a)\right) - \nabla F\left(\mathbf{w}_a\right)\right]^\top (\mathbf{w}_b-\mathbf{w}_a) \,dt, \\
%&\leq F(\,\mathbf{w}_a\,) + \nabla F\left(\mathbf{w}_a\right)^\top(\mathbf{w}_b-\mathbf{w}_a) +
%\int_{0}^{1} \, L \| t(\mathbf{w}_b-\mathbf{w}_a) \|_2 \|\mathbf{w}_b-\mathbf{w}_a\|_2 \,dt.
%\end{align*}
%Thus we have
%\begin{align*}
% F(\,\mathbf{w}_b\,)
%&\leq F(\,\mathbf{w}_a\,) + \nabla F\left(\mathbf{w}_a\right)^\top(\mathbf{w}_b-\mathbf{w}_a) +
%\frac{1}{2} L  \|\mathbf{w}_b-\mathbf{w}_a\|_2^2.
%\end{align*}
%\end{proof}
%-------------------------------------------------------------------------------------------------------------------------------
\begin{assumption}\label{Assump:Fmin}
  The function $F(\cdot)$ is lower bounded by $F_{\inf}$, i.e.,
\begin{equation}\label{Fbound}
  F_{\inf} \leq F(\mathbf{w}), \quad \forall\,\mathbf{w}\in\mathbb{R}^{nd_w}
\end{equation}
\end{assumption}
%-------------------------------------------------------------------------------------------------------------------------------
Without loss of generality, we assume that $F_{\inf} \geq 0$.
\noindent Now we make the following assumption regarding $\{\alpha_k\}$ and $\{\beta_k\}$:
%~~~~~~~~~~~~~~~~~~~~~~~~~~~~~~~~~~~~~~~~~~~~~~~~~~~~~~~~~~~~~~~~~~~~~~~~~~~~~~~~~~~~~~~~~~~~~~~~~~~~~~~~~~~~~~~~~~~~~~~~~~~~~~~~~~
\begin{assumption}\label{Assump:AlphaBeta}
Sequences $\{\alpha_k\}$ and $\{\beta_k\}$ are selected as
\begin{align}\label{Eqn:AlphaBeta}
\alpha_k = \frac{a}{(k+1)^{\delta_2}} \quad \textnormal{and} \quad \beta_k = \frac{b}{(k+1)^{\delta_1}},
\end{align}
where $a > 0$, $b>0$, $0 < 3\delta_1 < \delta_2 \leq 1$, $\delta_1 + \delta_2 > 1$, and $\delta_2 > 1/2$.
\end{assumption}
%~~~~~~~~~~~~~~~~~~~~~~~~~~~~~~~~~~~~~~~~~~~~~~~~~~~~~~~~~~~~~~~~~~~~~~~~~~~~~~~~~~~~~~~~~~~~~~~~~~~~~~~~~~~~~~~~~~~~~~~~~~~~~~~~~~
For sequences $\{\alpha_k\}$ and $\{\beta_k\}$ that satisfy Assumption \ref{Assump:AlphaBeta}, we have $\sum\limits_{k=1}^{\infty}\,\alpha_k = \infty$, $\sum\limits_{k=1}^{\infty}\,\beta_k = \infty$, $\sum\limits_{k=1}^{\infty}\,\alpha_k^2 < \infty$ and $\sum\limits_{k=1}^{\infty}\,\alpha_k\beta_k < \infty$. Thus $\alpha_k$ and $\beta_k$ are not summable sequences. However, $\alpha_k$ is square-summable and $\alpha_k\beta_k$ is summable.
%-------------------------------------------------------------------------------------------------------------------------------
\begin{assumption}\label{Assump:Graph}
  The interaction topology of $n$ networked agents is given as a connected undirected graph $\mathcal{G}\left(\mathcal{V},\mathcal{E}\right)$.
\end{assumption}
%-------------------------------------------------------------------------------------------------------------------------------

%For the connected undirected graph $\mathcal{G}\left(\mathcal{V},\mathcal{E}\right)$, $\mathcal{L}$ is a positive semi-definite matrix with one eigenvalue at 0 corresponding to the eigenvector $\mathbf{1}_n$. Furthermore, ~(Lemma~3~\cite{Gutman04}).
%
%\begin{remark}
%For all $\mathbf{x}\in\mathbb{R}^n$, such that $\mathbf{1}^T_n\mathbf{x} = 0$, we have $\mathbf{x}^T\mathcal{L}\left( \mathcal{L} \right)^+\mathbf{x} =  \mathbf{x}^T\mathbf{x} $.
%\end{remark}

%~~~~~~~~~~~~~~~~~~~~~~~~~~~~~~~~~~~~~~~~~~~~~~~~~~~~~~~~~~~~~~~~~~~~~~~~~~~~~~~~~~~~~~~~~~~~~~~~~~~~~~~~~~~~~~~~~~~~~~~~~~~~~~~~~~
\begin{lemma}\label{Lemma2}
  Given Assumption \ref{Assump:Graph}, for all $\mathbf{x}\in\mathbb{R}^n$ we have
\begin{align}\label{Eqn:lemma2eq2}
  \mathbf{x}^\top\mathcal{L}\mathbf{x} = \tilde{\mathbf{x}}^\top\mathcal{L}\tilde{\mathbf{x}}  \geq  \lambda_2(\mathcal{L})\|\tilde{\mathbf{x}}\|_2^2,
\end{align}
where $\tilde{\mathbf{x}} = \left(I_n - \frac{1}{n}\mathbf{1}_n\mathbf{1}^\top_n\right) \mathbf{x}$ is the average-consensus error and $\lambda_2(\cdot)$ denotes the smallest non-zero eigenvalue.
\end{lemma}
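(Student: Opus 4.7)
The plan is to exploit the spectral structure of the Laplacian of a connected undirected graph. Under Assumption \ref{Assump:Graph}, $\mathcal{L}$ is symmetric and positive semi-definite, its smallest eigenvalue is $0$ with associated eigenvector $\mathbf{1}_n$, and because the graph is connected this eigenvalue is simple, so the next eigenvalue $\lambda_2(\mathcal{L})$ is strictly positive. In particular, $\mathcal{L}\mathbf{1}_n = 0$ and $\mathbf{1}_n^\top \mathcal{L} = 0$.

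First, I would decompose $\mathbf{x}$ into its mean and its deviation from the mean, writing $\mathbf{x} = \bar{x}\,\mathbf{1}_n + \tilde{\mathbf{x}}$, where $\bar{x} = \frac{1}{n}\mathbf{1}_n^\top \mathbf{x}$ and $\tilde{\mathbf{x}} = \bigl(I_n - \frac{1}{n}\mathbf{1}_n\mathbf{1}_n^\top\bigr)\mathbf{x}$. Plugging this decomposition into the quadratic form $\mathbf{x}^\top \mathcal{L}\mathbf{x}$ and using $\mathcal{L}\mathbf{1}_n = 0$, every cross term and the $\bar{x}^2\mathbf{1}_n^\top\mathcal{L}\mathbf{1}_n$ term vanishes, leaving exactly $\tilde{\mathbf{x}}^\top \mathcal{L}\tilde{\mathbf{x}}$. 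This establishes the equality in \eqref{Eqn:lemma2eq2}.

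For the inequality, I would observe that the projector $I_n - \frac{1}{n}\mathbf{1}_n\mathbf{1}_n^\top$ annihilates $\mathbf{1}_n$, so $\mathbf{1}_n^\top \tilde{\mathbf{x}} = 0$; that is, $\tilde{\mathbf{x}}$ lies in the orthogonal complement of the null space of $\mathcal{L}$. By the Courant-Fischer (min-max) characterization applied to the symmetric PSD matrix $\mathcal{L}$, the Rayleigh quotient restricted to $\{\mathbf{y} : \mathbf{1}_n^\top \mathbf{y} = 0\}$ is bounded below by $\lambda_2(\mathcal{L})$, yielding $\tilde{\mathbf{x}}^\top \mathcal{L}\tilde{\mathbf{x}} \geq \lambda_2(\mathcal{L})\,\|\tilde{\mathbf{x}}\|_2^2$, which completes the proof.

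There is no real obstacle here; the argument is essentially a textbook spectral fact about graph Laplacians. The only item that needs to be stated explicitly is that connectivity of $\mathcal{G}$ guarantees the null space of $\mathcal{L}$ is exactly $\mathrm{span}\{\mathbf{1}_n\}$, so that $\lambda_2(\mathcal{L}) > 0$ and the Rayleigh bound is non-vacuous.
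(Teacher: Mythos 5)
Your proof is correct and follows essentially the same route as the paper, which simply invokes the Courant--Fischer theorem; you have just filled in the standard details (the mean-plus-deviation decomposition using $\mathcal{L}\mathbf{1}_n = 0$ for the equality, and the Rayleigh-quotient bound on $\{\mathbf{y} : \mathbf{1}_n^\top\mathbf{y} = 0\}$ for the inequality). No issues.
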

%~~~~~~~~~~~~~~~~~~~~~~~~~~~~~~~~~~~~~~~~~~~~~~~~~~~~~~~~~~~~~~~~~~~~~~~~~~~~~~~~~~~~~~~~~~~~~~~~~~~~~~~~~~~~~~~~~~~~~~~~~~~~~~~~~~
\begin{proof}
  This Lemma follows from the Courant-Fischer Theorem~\cite{Horn}.
\end{proof}
%~~~~~~~~~~~~~~~~~~~~~~~~~~~~~~~~~~~~~~~~~~~~~~~~~~~~~~~~~~~~~~~~~~~~~~~~~~~~~~~~~~~~~~~~~~~~~~~~~~~~~~~~~~~~~~~~~~~~~~~~~~~~~~~~~~
\begin{assumption}\label{Assump:Beta}
Parameter $b$ in sequence $\{\beta_k\}$ is selected such that
\begin{align}
\mathcal{W}_0 = \left(I_{n}-b\mathcal{L}\right)
\end{align}
has a single eigenvalue at $1$ corresponding to the right eigenvector $\mathbf{1}_n$ and the remaining $n-1$ eigenvalues of $\mathcal{W}_0$ are strictly inside the unit circle.
\end{assumption}
%~~~~~~~~~~~~~~~~~~~~~~~~~~~~~~~~~~~~~~~~~~~~~~~~~~~~~~~~~~~~~~~~~~~~~~~~~~~~~~~~~~~~~~~~~~~~~~~~~~~~~~~~~~~~~~~~~~~~~~~~~~~~~~~~~~
In other words, $b$ is selected such that $b < 1/\sigma_{\max}(\mathcal{L})$, where $\sigma_{\max}(\cdot)$ denotes the largest singular value. Thus, $b\sigma_{\max}(\mathcal{L}) < 1$.

Let $\mathbb{E}_{\xi}[\cdot]$ denote the expected value taken with respect to the distribution of the random variable $\bm{\xi}_k$ given the filtration $\mathcal{F}_{k}$ generated by the sequence $\{\mathbf{w}_{0},\ldots,\mathbf{w}_{k}\}$, i.e.,
\begin{align*}
   \mathbb{E}_{\xi}[\,\mathbf{w}_{k+1}\,] &= \mathbb{E}[\,\mathbf{w}_{k+1}\,|\mathcal{F}_{k}] \\
   &= \left(\mathcal{W}_k \otimes I_{d_w} \right) \mathbf{w}_k - \alpha_k \mathbb{E}[\,\mathbf{g}(\mathbf{w}_k,\bm{\xi}_k)\,|\mathcal{F}_{k}]\,\,\textnormal{a.s.},
\end{align*}
where $\textnormal{a.s.}$ (almost surely) denote events that occur with probability one. Now we make the following assumptions regarding the stochastic gradient term $\mathbf{g}(\mathbf{w}(k),\bm{\xi}(k))$.
%~~~~~~~~~~~~~~~~~~~~~~~~~~~~~~~~~~~~~~~~~~~~~~~~~~~~~~~~~~~~~~~~~~~~~~~~~~~~~~~~~~~~~~~~~~~~~~~~~~~~~~~~~~~~~~~~~~~~~~~~~~~~~~~~~~
\begin{assumption}\label{Assump:Grad1}
  Stochastic gradients are unbiased such that
\begin{equation}
  \mathbb{E}_{\xi}\left[\,  \mathbf{g}(\mathbf{w}_k,\bm{\xi}_k) \,\right] = \nabla F(\mathbf{w}_k), \quad \textnormal{a.s.}
\end{equation}
That is to say
\begin{align*}
  \mathbb{E}_{\xi}\left[\,  \mathbf{g}(\mathbf{w}_k,\bm{\xi}_k) \,\right] &=
\begin{bmatrix}
\mathbb{E}_{\xi_1}\left[\,\mathbf{g}_1\left( \bm{w}_1(k), \bm{\xi}_1(k) \right) \,\right]\\
\vdots \\
\mathbb{E}_{\xi_n}\left[\,\mathbf{g}_n\left( \bm{w}_n(k), \bm{\xi}_n(k) \right) \,\right]
\end{bmatrix}
=
\begin{bmatrix}
  \nabla f_1(\,\bm{w}_1(k)\,) \\
  \vdots \\
  \nabla f_n(\,\bm{w}_n(k)\,)
\end{bmatrix}% = \nabla F(\mathbf{w}_k), \quad \textnormal{a.s.}
\end{align*}
\end{assumption}
%~~~~~~~~~~~~~~~~~~~~~~~~~~~~~~~~~~~~~~~~~~~~~~~~~~~~~~~~~~~~~~~~~~~~~~~~~~~~~~~~~~~~~~~~~~~~~~~~~~~~~~~~~~~~~~~~~~~~~~~~~~~~~~~~~~
%~~~~~~~~~~~~~~~~~~~~~~~~~~~~~~~~~~~~~~~~~~~~~~~~~~~~~~~~~~~~~~~~~~~~~~~~~~~~~~~~~~~~~~~~~~~~~~~~~~~~~~~~~~~~~~~~~~~~~~~~~~~~~~~~~~
\begin{assumption}\label{Assump:Grad2}
 Stochastic gradients have conditionally bounded second moment, i.e., there exist scalars $\bar{\mu}_{v_1} \geq 0$ and $\bar{\mu}_{v_2} \geq 0$ such that
\begin{align}
\begin{split}
\mathbb{E}_{\xi} \left[ \| \mathbf{g}(\mathbf{w}_k,\bm{\xi}_k) \|_2^2 \right] \leq
\bar{\mu}_{v_1} + \bar{\mu}_{v_2} \left\|  \nabla F(\mathbf{w}_k) \right\|^2_2,\quad \textnormal{a.s.}.
\end{split}\label{Eq:2ndGrad}
\end{align}
\end{assumption}
%~~~~~~~~~~~~~~~~~~~~~~~~~~~~~~~~~~~~~~~~~~~~~~~~~~~~~~~~~~~~~~~~~~~~~~~~~~~~~~~~~~~~~~~~~~~~~~~~~~~~~~~~~~~~~~~~~~~~~~~~~~~~~~~~~~
Assumption~\ref{Assump:Grad2} is the bounded variance assumption typically make in SGD literature. Finally, it follows from Assumptions~\ref{Assump:Lipz}, \ref{Assump:Grad2} and Lemma \ref{Lemma:0} that the stochastic gradients are bounded, which is usually just assumed in literature~\cite{Nedic2009TAC,Bottou2018SIAM,Tatarenko2017,Zeng2018TSP}.
%~~~~~~~~~~~~~~~~~~~~~~~~~~~~~~~~~~~~~~~~~~~~~~~~~~~~~~~~~~~~~~~~~~~~~~~~~~~~~~~~~~~~~~~~~~~~~~~~~~~~~~~~~~~~~~~~~~~~~~~~~~~~~~~~~~
\begin{proposition}\label{Assump:BoundedGrad}
There exists a positive constant $\mu_g < \infty$ such that
\begin{align}
\sup\limits_{k\geq0} \, \mathbb{E} \left[ \| \mathbf{g}(\mathbf{w}_k,\bm{\xi}_k) \|_2^2 \right] \leq \mu_g.
\end{align}
\end{proposition}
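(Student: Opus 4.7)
The plan is to chain together Assumption~\ref{Assump:Grad2} (bounded conditional second moment of the stochastic gradient) with Lemma~\ref{Lemma:0} (uniform boundedness of $\nabla F$), and then eliminate the conditioning via the tower property of conditional expectation.

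First, I would fix an arbitrary $k \geq 0$ and start from Assumption~\ref{Assump:Grad2}, which gives, almost surely,
\begin{equation*}
\mathbb{E}_{\xi}\!\left[ \| \mathbf{g}(\mathbf{w}_k,\bm{\xi}_k) \|_2^2 \right]
\leq \bar{\mu}_{v_1} + \bar{\mu}_{v_2}\,\|\nabla F(\mathbf{w}_k)\|_2^2.
\end{equation*}
Next, I would invoke Lemma~\ref{Lemma:0}, which, under Assumption~\ref{Assump:Lipz} (Lipschitz continuity of each $f_i$, so that $\|\nabla f_i\|_2$ is bounded by $L_i^0$, hence $\|\nabla F\|_2$ is bounded by some $\mu_F$), to replace $\|\nabla F(\mathbf{w}_k)\|_2^2$ by the uniform bound $\mu_F^2$. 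This yields the deterministic (sample-path) bound
\begin{equation*}
\mathbb{E}_{\xi}\!\left[ \| \mathbf{g}(\mathbf{w}_k,\bm{\xi}_k) \|_2^2 \right]
\leq \bar{\mu}_{v_1} + \bar{\mu}_{v_2}\,\mu_F^2 \quad \text{a.s.}
\end{equation*}

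Then I would take total expectations on both sides and apply the tower property $\mathbb{E}[\,\cdot\,] = \mathbb{E}[\,\mathbb{E}[\,\cdot\,|\mathcal{F}_k]\,]$ together with the fact that $\mathbb{E}_{\xi}[\cdot] = \mathbb{E}[\,\cdot\,|\mathcal{F}_k]$ as defined earlier. The right-hand side is a constant, so
\begin{equation*}
\mathbb{E}\!\left[ \| \mathbf{g}(\mathbf{w}_k,\bm{\xi}_k) \|_2^2 \right]
\leq \bar{\mu}_{v_1} + \bar{\mu}_{v_2}\,\mu_F^2.
\end{equation*}
Since the bound does not depend on $k$, taking the supremum over $k \geq 0$ gives the result with $\mu_g \triangleq \bar{\mu}_{v_1} + \bar{\mu}_{v_2}\,\mu_F^2 < \infty$.

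There is no real obstacle here: the statement is essentially a bookkeeping consequence of previously established facts. The only subtlety worth being careful about is the measurability/conditioning step, namely verifying that $\mathbf{w}_k$ is $\mathcal{F}_k$-measurable so that $\|\nabla F(\mathbf{w}_k)\|_2$ may be pulled out of $\mathbb{E}_\xi[\cdot]$ (and thus bounded pointwise by $\mu_F$), which is immediate from the definition of the filtration $\mathcal{F}_k$ generated by $\{\mathbf{w}_0,\ldots,\mathbf{w}_k\}$.
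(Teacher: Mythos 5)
Your proposal is correct and follows exactly the paper's own (one-line) argument: take the expectation of the bound in Assumption~\ref{Assump:Grad2} and control $\|\nabla F(\mathbf{w}_k)\|_2^2$ via Lemma~\ref{Lemma:0}, yielding $\mu_g = \bar{\mu}_{v_1} + \bar{\mu}_{v_2}\mu_F^2$. You simply make explicit the tower-property and measurability details that the paper leaves implicit.
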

%~~~~~~~~~~~~~~~~~~~~~~~~~~~~~~~~~~~~~~~~~~~~~~~~~~~~~~~~~~~~~~~~~~~~~~~~~~~~~~~~~~~~~~~~~~~~~~~~~~~~~~~~~~~~~~~~~~~~~~~~~~~~~~~~~~
\begin{proof}
Proof follows from taking the expectation of \eqref{Eq:2ndGrad} and applying the result from Lemma~\ref{Lemma:0}.
\end{proof}

\section{Convergence Analysis}

Our strategy for proving the convergence of the proposed distributed SGD algorithm to a critical point is as follows. First we show that the consensus error among the agents are diminishing at the rate of $O\left(\frac{1}{(k+1)^{\delta_2}}\right)$ (see Theorem~\ref{Theorem:Consensus}). Asymptotic convergence of the algorithm is then proved in Theorem~\ref{Theorem:Convergence}. Theorem~\ref{Theorem:SummableGrad} then establishes that the weighted expected average gradient norm is a summable  sequence. Finally, Theorem~\ref{Theorem:OptCond} proves the asymptotic mean-square convergence of the algorithm to a critical point.

%~~~~~~~~~~~~~~~~~~~~~~~~~~~~~~~~~~~~~~~~~~~~~~~~~~~~~~~~~~~~~~~~~~~~~~~~~~~~~~~~~~~~~~~~~~~~~~~~~~~~~~~~~~~~~~~~~~~~~~~~~~~~~~~~~~
\begin{theorem}\label{Theorem:Consensus}
%  Given Assumptions \ref{Assump:Lipz}, \ref{Assump:Fmin}, \ref{Assump:AlphaBeta}, \ref{Assump:Graph}, \ref{Assump:Beta}, \ref{Assump:Grad1}, and \ref{Assump:Grad2}, the distributed SGD algorithm
%  \begin{align}\label{Eq:DSG1a}
%    \mathbf{w}(k + 1) &= \left( \mathcal{W}_k \otimes I_{d_w}\right) \,\mathbf{w}(k) - \alpha_k \, \mathbf{g}(\mathbf{w}(k),\bm{\xi}(k)),
%  \end{align}
%  guarantees that
  Consider distributed SGD algorithm \eqref{Eq:DSG1} under Assumptions [1-7]. Then, there holds:
  \begin{align}\label{Eq:MSbound1a}
      \mathbb{E} \left[ \| \tilde{\mathbf{w}}_k \|_2^2 \right] = O\left(\displaystyle\frac{1}{(k+1)^{\delta_2}}\right).
  \end{align}
\end{theorem}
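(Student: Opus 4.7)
The plan is to derive a scalar recursion for $u_k := \mathbb{E}[\|\tilde{\mathbf{w}}_k\|_2^2]$ by projecting the dynamics onto the disagreement subspace, exploiting the spectral gap of $\mathcal{W}_k$ on that subspace, and then applying a discrete Gronwall-type argument. Let $P = I_n - \tfrac{1}{n}\mathbf{1}_n\mathbf{1}_n^\top$ so that $\tilde{\mathbf{w}}_k = (P\otimes I_{d_w})\mathbf{w}_k$. Since $\mathcal{L}\mathbf{1}_n=0$ and $\mathcal{L}$ is symmetric, $\mathcal{L}P = P\mathcal{L}= \mathcal{L}$, whence $P\mathcal{W}_k = \mathcal{W}_k P = P - \beta_k\mathcal{L}$. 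Left-multiplying \eqref{Eq:DSG1} by $P\otimes I_{d_w}$ produces the clean recursion
\[
\tilde{\mathbf{w}}_{k+1} = (\mathcal{W}_k\otimes I_{d_w})\tilde{\mathbf{w}}_k - \alpha_k(P\otimes I_{d_w})\mathbf{g}(\mathbf{w}_k,\bm{\xi}_k).
\]

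Next I would bound the deterministic contraction. By Lemma~\ref{Lemma2} together with Assumption~\ref{Assump:Beta}, all nontrivial eigenvalues of $\mathcal{W}_k$ restricted to $\mathrm{range}(P)$ lie in $[1-b\lambda_n(\mathcal{L}),\,1-\beta_k\lambda_2(\mathcal{L})]\subset(0,1)$, so $\|(\mathcal{W}_k\otimes I)\tilde{\mathbf{w}}_k\|_2 \leq (1-\beta_k\lambda_2)\|\tilde{\mathbf{w}}_k\|_2$. Squaring the recursion, taking conditional expectation (Assumption~\ref{Assump:Grad1} converts the noise cross term into one involving $\nabla F(\mathbf{w}_k)$), bounding that inner product via Cauchy--Schwarz plus Lemma~\ref{Lemma:0}, and using Proposition~\ref{Assump:BoundedGrad} for the quadratic term yields
\[
\mathbb{E}\!\left[\|\tilde{\mathbf{w}}_{k+1}\|_2^2\mid\mathcal{F}_k\right] \leq (1-\beta_k\lambda_2)^2\|\tilde{\mathbf{w}}_k\|_2^2 + 2\alpha_k(1-\beta_k\lambda_2)\mu_F\|\tilde{\mathbf{w}}_k\|_2 + \alpha_k^2\mu_g.
\]
Applying Young's inequality $2xy\leq \beta_k\lambda_2 x^2 + y^2/(\beta_k\lambda_2)$ to the cross term and then total expectation gives
\[
u_{k+1} \leq \bigl(1-\beta_k\lambda_2+\beta_k^2\lambda_2^2\bigr)u_k + \alpha_k^2\mu_g + \frac{\alpha_k^2(1-\beta_k\lambda_2)^2\mu_F^2}{\beta_k\lambda_2}.
\]

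Because $\beta_k\to 0$, there is some $k_0$ beyond which the leading coefficient is at most $1-\tfrac{1}{2}\beta_k\lambda_2$ and the forcing term is $\Theta(\alpha_k^2/\beta_k) = \Theta((k+1)^{-(2\delta_2-\delta_1)})$. I would then invoke a standard discrete Gronwall lemma for recursions $u_{k+1}\leq (1-c\gamma_k)u_k + \delta_k$ with $\gamma_k=\Theta(k^{-\delta_1})$, $\delta_1\in(0,1)$, and polynomially decaying $\delta_k$: this produces $u_k = O(\delta_k/\gamma_k) = O(\alpha_k^2/\beta_k^2) = O((k+1)^{-2(\delta_2-\delta_1)})$. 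Assumption~\ref{Assump:AlphaBeta} imposes $3\delta_1<\delta_2$, hence $2(\delta_2-\delta_1)>\delta_2$, and the asserted $O((k+1)^{-\delta_2})$ rate follows. The main obstacle will be twofold: (a) tuning the Young parameter so the absorbed cross term preserves, rather than destroys, the $(1-\Theta(\beta_k))$ contraction of the scalar recursion; and (b) executing the Gronwall step so that the polynomial rate holds uniformly — the quantitative gap $3\delta_1<\delta_2$ is precisely what ensures the forcing $\alpha_k^2/\beta_k$ decays strictly faster than $\beta_k$, which is the structural condition needed for the recursion to track its forcing at rate $\delta_k/\gamma_k$.
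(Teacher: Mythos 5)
Your proof is correct and follows the same skeleton as the paper's (project onto the disagreement subspace, use the spectral gap of $\mathcal{W}_k$ on $\mathrm{range}(P)$, reduce to a scalar recursion, apply a Chung/Gronwall-type rate lemma — the paper's Lemma~\ref{Lemma:Kar}), but the one substantive difference is how the cross term is absorbed, and it changes the quantitative outcome. The paper never uses unbiasedness here: it first bounds $\|\tilde{\mathbf{w}}_{k+1}\|_2 \le (1-\beta_k\lambda_2)\|\tilde{\mathbf{w}}_k\|_2 + \alpha_k\|\mathbf{g}_k\|_2$ and then applies $(x+y)^2\le(1+\theta)x^2+(1+\theta^{-1})y^2$ with $\theta=\beta_k\lambda_2$, which degrades the contraction to $1-\beta_k^2\lambda_2^2$, i.e.\ $r_1(k)=\Theta((k+1)^{-2\delta_1})$; with forcing $\Theta(\alpha_k^2/\beta_k)$ this gives $\mathbb{E}[\|\tilde{\mathbf{w}}_k\|_2^2]=o((k+1)^{-\delta_0})$ for every $\delta_0<2\delta_2-3\delta_1$, and the conclusion then hinges on the full strength of $\delta_2>3\delta_1$. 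Your route — conditional expectation, Assumption~\ref{Assump:Grad1} plus Cauchy--Schwarz and Lemma~\ref{Lemma:0} on the cross term, and a Young parameter $\beta_k\lambda_2$ tuned to the contraction — preserves an $O(\beta_k)$ contraction, $r_1(k)=\Theta((k+1)^{-\delta_1})$, and yields the sharper rate $O((k+1)^{-2(\delta_2-\delta_1)})$, for which only $\delta_2>2\delta_1$ is needed. Two small remarks: (i) your closing sentence misattributes the role of $3\delta_1<\delta_2$ — in \emph{your} derivation that condition is not what is needed (only $\delta_2>2\delta_1$ is); it is the paper's cruder $\beta_k^2$-contraction that makes the full $3\delta_1<\delta_2$ necessary; (ii) the ``standard discrete Gronwall lemma'' you invoke should be stated or cited precisely — the paper's Lemma~\ref{Lemma:Kar} gives the slightly weaker conclusion $(k+1)^{\epsilon_0}u_k\to 0$ for all $\epsilon_0<\epsilon_2-\epsilon_1$ rather than the sharp $u_k=O(\delta_k/\gamma_k)$, but either form closes your argument since $2(\delta_2-\delta_1)>\delta_2$.
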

%~~~~~~~~~~~~~~~~~~~~~~~~~~~~~~~~~~~~~~~~~~~~~~~~~~~~~~~~~~~~~~~~~~~~~~~~~~~~~~~~~~~~~~~~~~~~~~~~~~~~~~~~~~~~~~~~~~~~~~~~~~~~~~~~~~
\begin{proof}
See Appendix~\textsc{B}.
\end{proof}

\noindent Let
\begin{equation}
    \gamma_k = \frac{\alpha_k}{\beta_k} = \frac{a/b}{(k+1)^{\delta_2-\delta_1}}.
\end{equation}
Now define a non-negative function $V(\gamma_k,\mathbf{w}_k)$ as
\begin{equation}\label{Vk}
  V(\gamma_k,\mathbf{w}_k)
  = F(\mathbf{w}_k) + \frac{1}{2\gamma_k}\, \mathbf{w}^\top_k \left(\mathcal{L}\otimes I_{d_w}\right)\mathbf{w}_k.
\end{equation}
Now taking the gradient with respect to $\mathbf{w}$ yields
\begin{equation}\label{dVk}
  \nabla V(\gamma_k,\mathbf{w}_k) = \nabla F(\mathbf{w}_k) + \frac{1}{\gamma_k}\, \left(\mathcal{L}\otimes I_{d_w}\right)\mathbf{w}_k.
\end{equation}
%~~~~~~~~~~~~~~~~~~~~~~~~~~~~~~~~~~~~~~~~~~~~~~~~~~~~~~~~~~~~~~~~~~~~~~~~~~~~~~~~~~~~~~~~~~~~~~~~~~~~~~~~~~~~~~~~~~~~~~~~~~~~~~~~~~
\begin{theorem}\label{Theorem:InfSum}
  Consider distributed SGD algorithm \eqref{Eq:DSG1} under Assumptions [1-7]. Then, for the gradient $\nabla V(\gamma_k,\mathbf{w}_k)$ given in \eqref{dVk}, there holds:
  \begin{align}\label{Eqn:SummableGrad}
      \sum\limits_{k=0}^{\infty} \, \alpha_k \mathbb{E}\left[ \left\|  \nabla V(\gamma_k,\mathbf{w}_k)  \right\|^2_2 \right] < \infty.
  \end{align}
\end{theorem}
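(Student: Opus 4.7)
The plan is to treat $V(\gamma_k,\mathbf{w}_k)$ as a time-varying Lyapunov function and derive a one-step stochastic descent inequality whose summation yields the desired bound. The crucial observation is the identity
\[
\alpha_k\,\nabla V(\gamma_k,\mathbf{w}_k) \;=\; \alpha_k\nabla F(\mathbf{w}_k) + \beta_k(\mathcal{L}\otimes I_{d_w})\mathbf{w}_k,
\]
which together with Assumption~\ref{Assump:Grad1} gives $\mathbb{E}_\xi[\mathbf{w}_{k+1}-\mathbf{w}_k] = -\alpha_k\,\nabla V(\gamma_k,\mathbf{w}_k)$. Thus in expectation the algorithm performs SGD on the moving objective $V$ with step-size $\alpha_k$, and the task reduces to a descent-lemma argument that also controls the drift in the coefficient $1/\gamma_k$.

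The descent step itself I would construct by writing, with $M \triangleq \mathcal{L}\otimes I_{d_w}$ and $\Delta\mathbf{w}\triangleq \mathbf{w}_{k+1}-\mathbf{w}_k$,
\begin{align*}
V(\gamma_{k+1},\mathbf{w}_{k+1}) - V(\gamma_k,\mathbf{w}_k) &= \bigl[F(\mathbf{w}_{k+1})-F(\mathbf{w}_k)\bigr] + \tfrac{1}{2\gamma_k}\bigl[2\mathbf{w}_k^\top M\Delta\mathbf{w}+\Delta\mathbf{w}^\top M\Delta\mathbf{w}\bigr] \\
&\quad + \tfrac{1}{2}\bigl(\gamma_{k+1}^{-1}-\gamma_k^{-1}\bigr)\mathbf{w}_{k+1}^\top M\mathbf{w}_{k+1}.
\end{align*}
Applying Lemma~\ref{Lemma:Lipz} to the first bracket produces a cross term that combines with $\tfrac{1}{\gamma_k}\mathbf{w}_k^\top M\Delta\mathbf{w}$ into $\nabla V(\gamma_k,\mathbf{w}_k)^\top\Delta\mathbf{w}$; after taking $\mathbb{E}_\xi[\cdot]$ this contributes exactly $-\alpha_k\|\nabla V(\gamma_k,\mathbf{w}_k)\|_2^2$ by the identity above. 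The residuals $\tfrac{L}{2}\|\Delta\mathbf{w}\|_2^2$ and $\tfrac{1}{2\gamma_k}\Delta\mathbf{w}^\top M\Delta\mathbf{w}$ are bounded using $\mathbb{E}_\xi[\|\Delta\mathbf{w}\|_2^2]=\alpha_k^2\|\nabla V\|_2^2 + \alpha_k^2\,\mathbb{E}_\xi[\|\mathbf{g}-\nabla F\|_2^2]$ (with the variance term bounded via Assumption~\ref{Assump:Grad2} and Proposition~\ref{Assump:BoundedGrad}) and $\Delta\mathbf{w}^\top M\Delta\mathbf{w}\leq \lambda_{\max}(\mathcal{L})\|\Delta\mathbf{w}\|_2^2$. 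Because $\alpha_k^2/\gamma_k=\alpha_k\beta_k$ and $\alpha_k,\beta_k\to 0$, for $k$ large enough the coefficient on $\|\nabla V\|_2^2$ is dominated by $-\alpha_k/2$, giving
\begin{align*}
\mathbb{E}_\xi[V(\gamma_{k+1},\mathbf{w}_{k+1})] - V(\gamma_k,\mathbf{w}_k)
&\leq -\tfrac{\alpha_k}{2}\|\nabla V(\gamma_k,\mathbf{w}_k)\|_2^2 + C_1(\alpha_k^2+\alpha_k\beta_k)\\
&\quad + \tfrac{1}{2}\bigl(\gamma_{k+1}^{-1}-\gamma_k^{-1}\bigr)\mathbb{E}_\xi[\mathbf{w}_{k+1}^\top M\mathbf{w}_{k+1}].
\end{align*}

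Taking full expectation and summing from an index $k_0$ large enough through $K$, the left side telescopes and using $V\geq 0$ (valid since $F_{\inf}\geq 0$ and $M\succeq 0$) bounds $-\tfrac{1}{2}\sum_{k}\alpha_k\mathbb{E}\|\nabla V\|_2^2$ from below by a constant minus the two error series. The $\alpha_k^2$ and $\alpha_k\beta_k$ series are finite by Assumption~\ref{Assump:AlphaBeta}. The anticipated main obstacle is the positive drift term $\tfrac{1}{2}(\gamma_{k+1}^{-1}-\gamma_k^{-1})\mathbb{E}[\mathbf{w}_{k+1}^\top M\mathbf{w}_{k+1}]$, since $\gamma_k$ is decreasing. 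I would neutralize it by noting that $\mathbf{1}_n$ lies in the null space of $\mathcal{L}$, so $\mathbf{w}^\top M\mathbf{w} = \tilde{\mathbf{w}}^\top M\tilde{\mathbf{w}}\leq \lambda_{\max}(\mathcal{L})\|\tilde{\mathbf{w}}\|_2^2$, and then invoking Theorem~\ref{Theorem:Consensus} to get $\mathbb{E}[\|\tilde{\mathbf{w}}_{k+1}\|_2^2]=O((k+1)^{-\delta_2})$. A direct expansion shows $\gamma_{k+1}^{-1}-\gamma_k^{-1}=O((k+1)^{\delta_2-\delta_1-1})$, so the summand is $O((k+1)^{-1-\delta_1})$, which is summable because $\delta_1>0$ under Assumption~\ref{Assump:AlphaBeta}. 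Rearranging and letting $K\to\infty$ then produces $\sum_{k=0}^{\infty}\alpha_k\,\mathbb{E}\|\nabla V(\gamma_k,\mathbf{w}_k)\|_2^2<\infty$, as claimed.
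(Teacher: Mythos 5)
Your proof is correct and follows essentially the same route as the paper's: the same Lyapunov function $V(\gamma_k,\mathbf{w}_k)$, the same one-step descent inequality with the drift term $\tfrac12\left(\gamma_{k+1}^{-1}-\gamma_k^{-1}\right)\mathbf{w}_{k+1}^\top\left(\mathcal{L}\otimes I_{d_w}\right)\mathbf{w}_{k+1}$ controlled by Theorem~\ref{Theorem:Consensus} together with the decay bound on $\gamma_{k+1}^{-1}-\gamma_k^{-1}$, and a final summation. The only cosmetic differences are that you expand the quadratic part of $V$ exactly rather than invoking Lipschitz continuity of $\nabla V(\gamma_k,\cdot)$, and you telescope directly using $V\geq 0$ instead of citing the Robbins--Siegmund lemma; both yield the same bounds.
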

%~~~~~~~~~~~~~~~~~~~~~~~~~~~~~~~~~~~~~~~~~~~~~~~~~~~~~~~~~~~~~~~~~~~~~~~~~~~~~~~~~~~~~~~~~~~~~~~~~~~~~~~~~~~~~~~~~~~~~~~~~~~~~~~~~~
\begin{proof}
See Appendix~\textsc{C}.
\end{proof}

\begin{theorem}\label{Theorem:Convergence}
  For the distributed SGD algorithm \eqref{Eq:DSG1} under Assumptions [1-7] we have
      \begin{align}\label{InEq:DSG3}
      \sum\limits_{k=0}^{\infty} \, \mathbb{E} \left[ \left\| \mathbf{w}_{k + 1} - \mathbf{w}_{k} \right\|_2^2 \right] &< \infty.
    \end{align}
    and
    \begin{align}\label{ConvergenceEqn}
      \lim_{k\rightarrow\infty} \, \mathbb{E} \left[ \left\| {\mathbf{w}}_{k + 1} - {\mathbf{w}}_{k} \right\|_2^2 \right] =0.
    \end{align}
\end{theorem}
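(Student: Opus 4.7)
The plan is to obtain both claims directly from the one-step increment of the algorithm by decomposing it into a consensus piece, controlled by Theorem~\ref{Theorem:Consensus}, and a stochastic-gradient piece, controlled by Proposition~\ref{Assump:BoundedGrad}, and then reading off summability from Assumption~\ref{Assump:AlphaBeta}.

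First I would rewrite \eqref{Eq:DSG1} in increment form:
\begin{equation*}
  \mathbf{w}_{k+1} - \mathbf{w}_k = -\beta_k\,(\mathcal{L}\otimes I_{d_w})\,\mathbf{w}_k - \alpha_k\,\mathbf{g}(\mathbf{w}_k,\bm{\xi}_k).
\end{equation*}
The key observation is that $\mathcal{L}\mathbf{1}_n=0$, so $(\mathcal{L}\otimes I_{d_w})\mathbf{w}_k = (\mathcal{L}\otimes I_{d_w})\tilde{\mathbf{w}}_k$, where $\tilde{\mathbf{w}}_k$ is the consensus error introduced in Lemma~\ref{Lemma2} (lifted to $\mathbb{R}^{nd_w}$ via the Kronecker product). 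Applying $\|a+b\|_2^2 \leq 2\|a\|_2^2 + 2\|b\|_2^2$ together with $\|(\mathcal{L}\otimes I_{d_w})\tilde{\mathbf{w}}_k\|_2 \leq \sigma_{\max}(\mathcal{L})\|\tilde{\mathbf{w}}_k\|_2$ gives
\begin{equation*}
  \|\mathbf{w}_{k+1}-\mathbf{w}_k\|_2^2 \leq 2\sigma_{\max}(\mathcal{L})^2\beta_k^2\,\|\tilde{\mathbf{w}}_k\|_2^2 + 2\alpha_k^2\,\|\mathbf{g}(\mathbf{w}_k,\bm{\xi}_k)\|_2^2.
\end{equation*}

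Next I would take total expectation and invoke Theorem~\ref{Theorem:Consensus} to bound $\mathbb{E}[\|\tilde{\mathbf{w}}_k\|_2^2] = O\bigl(1/(k+1)^{\delta_2}\bigr)$ and Proposition~\ref{Assump:BoundedGrad} to bound $\mathbb{E}[\|\mathbf{g}(\mathbf{w}_k,\bm{\xi}_k)\|_2^2] \leq \mu_g$. This yields, for some constant $C>0$,
\begin{equation*}
  \mathbb{E}\bigl[\|\mathbf{w}_{k+1}-\mathbf{w}_k\|_2^2\bigr] \leq C\,\frac{\beta_k^2}{(k+1)^{\delta_2}} + 2\mu_g\,\alpha_k^2 = O\!\left(\frac{1}{(k+1)^{2\delta_1+\delta_2}}\right) + O\!\left(\frac{1}{(k+1)^{2\delta_2}}\right).
\end{equation*}
Summability \eqref{InEq:DSG3} then reduces to checking that both exponents exceed one. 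For the second, $2\delta_2 > 1$ is exactly the hypothesis $\delta_2 > 1/2$. For the first, since $\delta_1+\delta_2 > 1$ and $\delta_1 > 0$, we have $2\delta_1+\delta_2 > 1 + \delta_1 > 1$.

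Finally, the vanishing-limit statement \eqref{ConvergenceEqn} is an immediate consequence: any non-negative real sequence with finite sum must tend to zero. I do not foresee a substantive obstacle here; the whole argument is a triangle-inequality split followed by a routine rate comparison against Assumption~\ref{Assump:AlphaBeta}. The only point that warrants care is the replacement of $\mathbf{w}_k$ by $\tilde{\mathbf{w}}_k$ through the null-space property of $\mathcal{L}$, since without it the consensus-rate bound from Theorem~\ref{Theorem:Consensus} cannot be applied and the term $\beta_k^2\mathbb{E}[\|\mathbf{w}_k\|_2^2]$ has no a priori decay.
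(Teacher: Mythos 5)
Your argument is correct, but it is a genuinely different route from the paper's. The paper proves this theorem by keeping the increment in the combined form $\mathbf{w}_{k+1}-\mathbf{w}_k=-\alpha_k\bigl(\mathbf{g}(\mathbf{w}_k,\bm{\xi}_k)+\tfrac{1}{\gamma_k}(\mathcal{L}\otimes I_{d_w})\mathbf{w}_k\bigr)$, bounding its conditional second moment by $\mu_{v_1}+\mu_{v_2}\|\nabla V(\gamma_k,\mathbf{w}_k)\|_2^2$ (the analogue of Assumption~\ref{Assump:Grad2} for the shifted gradient), and then invoking Theorem~\ref{Theorem:InfSum} together with square-summability of $\alpha_k$ to sum the resulting series; so the paper's proof leans on the Lyapunov/supermartingale machinery behind $\sum_k\alpha_k\mathbb{E}[\|\nabla V(\gamma_k,\mathbf{w}_k)\|_2^2]<\infty$. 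You instead split the increment into the Laplacian drift and the stochastic-gradient term, kill the drift with the null-space identity $(\mathcal{L}\otimes I_{d_w})\mathbf{w}_k=(\mathcal{L}\otimes I_{d_w})\tilde{\mathbf{w}}_k$ plus the consensus rate of Theorem~\ref{Theorem:Consensus}, and bound the gradient term with Proposition~\ref{Assump:BoundedGrad}; your exponent checks ($2\delta_2>1$ from $\delta_2>1/2$, and $2\delta_1+\delta_2=\delta_1+(\delta_1+\delta_2)>1+\delta_1>1$) are correct, and there is no circularity since Theorem~\ref{Theorem:Consensus} and Proposition~\ref{Assump:BoundedGrad} are established independently. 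Your version is more elementary — it bypasses Theorem~\ref{Theorem:InfSum} and the constants $\mu_{v_1},\mu_{v_2}$ entirely — and it buys something extra: an explicit polynomial decay rate $\mathbb{E}[\|\mathbf{w}_{k+1}-\mathbf{w}_k\|_2^2]=O\bigl((k+1)^{-\min\{2\delta_1+\delta_2,\,2\delta_2\}}\bigr)$ rather than bare summability. What the paper's route buys in exchange is a bound expressed through $\|\nabla V\|_2^2$, which is the quantity it reuses immediately afterwards in Theorems~\ref{Theorem:SummableGrad} and~\ref{Theorem:OptCond}, so the two proofs are organized around different downstream needs.
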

\begin{proof}
See Appendix~\textsc{D}.
\end{proof}

Define $\bar{\mathbf{w}}_{k} = \displaystyle \frac{1}{n} \left(\mathbf{1}_n \mathbf{1}_n^\top \otimes I_{d_w} \right) \mathbf{w}_{k}$ and $ \overline{\nabla F} (\mathbf{w}_k) = \displaystyle \frac{1}{n} \left(\mathbf{1}_n \mathbf{1}_n^\top \otimes I_{d_w} \right) \nabla F(\mathbf{w}_k)$.

\begin{theorem}\label{Theorem:SummableGrad}
  For the distributed SGD algorithm \eqref{Eq:DSG1} under Assumptions [1-7] we have
  \begin{equation}\label{Eqn:SummableDFbar}
    \sum\limits_{k=0}^{\infty} \, \alpha_k\, \mathbb{E}\left[ \left\| \overline{\nabla F} (\mathbf{w}_k) \right\|^2_2 \right] < \infty.
  \end{equation}
\end{theorem}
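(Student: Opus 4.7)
The plan is to deduce this result almost immediately from Theorem~\ref{Theorem:InfSum} by exploiting the fact that the averaging projection annihilates the Laplacian term in $\nabla V(\gamma_k,\mathbf{w}_k)$. Let $P = \frac{1}{n}\mathbf{1}_n\mathbf{1}_n^\top \in \mathbb{R}^{n\times n}$, so that $\overline{\nabla F}(\mathbf{w}_k) = (P\otimes I_{d_w})\nabla F(\mathbf{w}_k)$. I will use the two properties that (i) $P$ is an orthogonal projection, hence $\|P\otimes I_{d_w}\|_2 = 1$, and (ii) for an undirected graph $\mathbf{1}_n^\top \mathcal{L} = 0$ (Assumption~\ref{Assump:Graph}), so $P\mathcal{L} = 0$ and therefore $(P\otimes I_{d_w})(\mathcal{L}\otimes I_{d_w}) = 0$.

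Applying $(P\otimes I_{d_w})$ to the identity \eqref{dVk} then yields
\begin{equation*}
(P\otimes I_{d_w})\nabla V(\gamma_k,\mathbf{w}_k) = (P\otimes I_{d_w})\nabla F(\mathbf{w}_k) + \tfrac{1}{\gamma_k}(P\mathcal{L}\otimes I_{d_w})\mathbf{w}_k = \overline{\nabla F}(\mathbf{w}_k).
\end{equation*}
Taking norms and using the operator-norm bound $\|P\otimes I_{d_w}\|_2 = 1$ gives
\begin{equation*}
\|\overline{\nabla F}(\mathbf{w}_k)\|_2^2 \leq \|\nabla V(\gamma_k,\mathbf{w}_k)\|_2^2.
\end{equation*}

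Multiplying both sides by $\alpha_k \geq 0$, taking expectation, and summing over $k$ from $0$ to $\infty$, I obtain
\begin{equation*}
\sum_{k=0}^{\infty}\alpha_k\,\mathbb{E}\left[\|\overline{\nabla F}(\mathbf{w}_k)\|_2^2\right] \leq \sum_{k=0}^{\infty}\alpha_k\,\mathbb{E}\left[\|\nabla V(\gamma_k,\mathbf{w}_k)\|_2^2\right] < \infty,
\end{equation*}
where the final inequality is precisely Theorem~\ref{Theorem:InfSum}. This establishes \eqref{Eqn:SummableDFbar}.

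There is no real obstacle here beyond observing the projection/Laplacian orthogonality; the heavy lifting was already done in Theorem~\ref{Theorem:InfSum} (which controls the composite gradient of the Lyapunov-like function $V$) and Theorem~\ref{Theorem:Consensus} (which is implicitly used inside that bound). In particular, no additional step-size conditions or stochastic-gradient bounds need to be invoked, so the argument is a direct algebraic reduction rather than a new estimate.
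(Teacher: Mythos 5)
Your proof is correct and rests on exactly the same two facts as the paper's argument --- the averaging projection $P\otimes I_{d_w}$ annihilates the Laplacian term in $\nabla V(\gamma_k,\mathbf{w}_k)$ and is non-expansive --- combined with the summability from Theorem~\ref{Theorem:InfSum}. The paper arrives at the same inequality $\left\|\overline{\nabla F}(\mathbf{w}_k)\right\|_2\leq\left\|\nabla V(\gamma_k,\mathbf{w}_k)\right\|_2$ by a detour through the conditional expected increment $\mathbb{E}_{\xi}\left[\mathbf{w}_{k+1}-\mathbf{w}_k\right]=-\alpha_k\nabla V(\gamma_k,\mathbf{w}_k)$ and its projection onto the consensus subspace, so your version is simply a more direct phrasing of the identical argument.
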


\begin{proof}
See Appendix~\textsc{E}.
\end{proof}

Theorem \ref{Theorem:SummableGrad} establishes results about the weighted sum of expected average gradient norm and the key takeaway from this result is that, for the distributed SGD in \eqref{Eq:DSG1} with appropriate step-sizes, the expected average gradient norms cannot stay bounded away from zero (See Theorem 9 of \cite{Bottou2018SIAM}), i.e.,
\begin{align}\label{LimInf}
  \liminf_{k\rightarrow\infty}\,\mathbb{E}\left[ \left\| \overline{\nabla F} (\mathbf{w}_k) \right\|^2_2 \right] = 0.
\end{align}
Finally, we present the following result to illustrate that stronger convergence results follows from the continuity assumption on the Hessian, which has not been utilized in our analysis so far.
%-------------------------------------------------------------------------------------------------------------------------------
\begin{assumption}\label{Assump:LipzHess}
  The Hessians $\nabla^2 f_i(\,\cdot\,)$ $:\mathbb{R}^{d_w}\mapsto \mathbb{R}^{d_w\times d_w}$ are Lipschitz continuous with Lipschitz constants $L_{H_i}$, i.e., $\forall \,\bm{w}_a,\,\bm{w}_b\in\mathbb{R}^{d_w}, \, i=1,\ldots,n$, we have
\begin{equation}\label{Eq:LipzHess}
  \| \nabla^2 f_i(\,\bm{w}_a\,) - \nabla^2 f_i(\,\bm{w}_b\,) \|_2 \leq L_{H_i} \|\bm{w}_a-\bm{w}_b\|_2.
\end{equation}
\end{assumption}
%-------------------------------------------------------------------------------------------------------------------------------
It follows from Assumption~\ref{Assump:LipzHess} that the Hessian $\nabla^2 F(\cdot)$ is Lipschitz continuous, i.e., $\forall \,\mathbf{w}_a,\,\mathbf{w}_b\in\mathbb{R}^{n d_w}$,
\begin{equation}\label{Eq:LipzHess1}
  \| \nabla^2 F(\,\mathbf{w}_a\,) - \nabla^2 F(\,\mathbf{w}_b\,) \|_2 \leq L_H \|\mathbf{w}_a-\mathbf{w}_b\|_2,
\end{equation}
with constant $L_H = \max\limits_i\{L_{H_i}\}$.

%-------------------------------------------------------------------------------------------------------------------------------
%-------------------------------------------------------------------------------------------------------------------------------
\begin{theorem}\label{Theorem:OptCond}
  For the distributed SGD algorithm \eqref{Eq:DSG1} under Assumptions [1-8] we have
    \begin{align}\label{InEq:Opt}
        \lim_{k\rightarrow\infty}\, \mathbb{E}\left[\, \left\| \overline{\nabla F} (\mathbf{w}_k) \right\|^2_2 \,\right] = 0.
    \end{align}
\end{theorem}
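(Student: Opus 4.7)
The aim is to upgrade the $\liminf$ conclusion \eqref{LimInf} to a full limit, exploiting the extra regularity afforded by Assumption~\ref{Assump:LipzHess}. The plan is an argument by contradiction in the spirit of Bottou--Curtis--Nocedal's Theorem~10.13: if $\limsup_k u_k > 0$, where $u_k := \mathbb{E}[\|\overline{\nabla F}(\mathbf{w}_k)\|_2^2]$, then $u_k$ must climb repeatedly from below some $\epsilon$ to above $2\epsilon$, and Lipschitz continuity of the Hessian is exactly what lets me bound each such climb tightly enough to contradict $\sum_k \alpha_k u_k < \infty$ together with $\sum_k \alpha_k = \infty$.

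The key estimate is a second-order Taylor expansion of $\nabla F$. Assumption~\ref{Assump:LipzHess} yields
\[
\nabla F(\mathbf{w}_{k+1}) = \nabla F(\mathbf{w}_k) + \nabla^2 F(\mathbf{w}_k)(\mathbf{w}_{k+1}-\mathbf{w}_k) + r_k,\quad \|r_k\|_2 \le \tfrac{L_H}{2}\|\mathbf{w}_{k+1}-\mathbf{w}_k\|_2^2.
\]
Letting $\Pi := \tfrac{1}{n}(\mathbf{1}_n\mathbf{1}_n^\top\otimes I_{d_w})$, so that $\overline{\nabla F}(\mathbf{w}_k) = \Pi\nabla F(\mathbf{w}_k)$, I square $\Pi\nabla F(\mathbf{w}_{k+1})$ and take the $\mathcal{F}_k$-conditional expectation. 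Assumption~\ref{Assump:Grad1} annihilates the first-order stochastic cross term, leaving a deterministic first-order piece driven by $\mathbb{E}[\mathbf{w}_{k+1}-\mathbf{w}_k\mid\mathcal{F}_k] = -\beta_k(\mathcal{L}\otimes I_{d_w})\tilde{\mathbf{w}}_k - \alpha_k\nabla F(\mathbf{w}_k)$ plus a second-order remainder of size $O(\|\mathbf{w}_{k+1}-\mathbf{w}_k\|_2^2)$. Combining Lemma~\ref{Lemma:0} with Theorem~\ref{Theorem:Consensus} (and Cauchy--Schwarz, giving $\mathbb{E}[\|\tilde{\mathbf{w}}_k\|_2] = O((k+1)^{-\delta_2/2})$) and Theorem~\ref{Theorem:Convergence}, I expect a one-step bound of the form
\[
\mathbb{E}[u_{k+1}-u_k] \le C_1\alpha_k + C_2\beta_k\,(k+1)^{-\delta_2/2} + c_k,\qquad \sum_k c_k < \infty.
\]

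To close the argument, suppose $\limsup_k u_k > 2\epsilon$; since $\liminf_k u_k = 0$, pick index intervals $[m_j,n_j]$ on which $u$ climbs from $\le\epsilon$ to $\ge 2\epsilon$ and stays $\ge\epsilon$ in between. Theorem~\ref{Theorem:SummableGrad} implies $\sum_{i=m_j}^{n_j-1}\alpha_i \le \tfrac{1}{\epsilon}\sum_{i=m_j}^{n_j-1}\alpha_i u_i \to 0$ as $j\to\infty$; telescoping the one-step bound then forces $\epsilon \le u_{n_j}-u_{m_j} \to 0$, a contradiction. The main obstacle I anticipate is the middle term $C_2\beta_k(k+1)^{-\delta_2/2}$: because $3\delta_1<\delta_2$ makes $\beta_k$ decay slower than $\alpha_k$, outright summability of this term generally fails in the admissible parameter regime, so a sharper estimate is needed---either a Young's-inequality split that absorbs part of it into $\alpha_k u_k$ (already summable by Theorem~\ref{Theorem:SummableGrad}), or a direct argument that its partial sums over the climbing intervals vanish in the limit. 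The Lipschitz-Hessian hypothesis is what enables the second-order expansion and, consequently, the appearance of $\alpha_k$ (rather than $\sqrt{\alpha_k}$) in the leading term, which is what makes the contradiction close.
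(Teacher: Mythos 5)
Your route is genuinely different from the paper's (a Bottou--Curtis--Nocedal style contradiction over climbing intervals, built on a second-order Taylor expansion of $\nabla F$), but it has a real gap, and it is exactly the one you flag yourself: the cross term produced by the $\beta_k$-part of the drift. When you square $\Pi\nabla F(\mathbf{w}_{k+1})$ and split $\mathbb{E}[\mathbf{w}_{k+1}-\mathbf{w}_k\mid\mathcal{F}_k]$ into $-\alpha_k\nabla F(\mathbf{w}_k)$ and $-\beta_k(\mathcal{L}\otimes I_{d_w})\tilde{\mathbf{w}}_k$, the latter contributes a term of order $\beta_k\,\mathbb{E}\|\tilde{\mathbf{w}}_k\|_2 = O\bigl((k+1)^{-\delta_1-\delta_2/2}\bigr)$ (there is no cancellation here, since $\Pi$ does not commute with the block-diagonal Hessian). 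Because Assumption~\ref{Assump:AlphaBeta} forces $\delta_1<\delta_2/3$ and $\delta_2\le 1$, the exponent $\delta_1+\delta_2/2<5/6$, so this term is not summable; it also decays strictly slower than $\alpha_k$, so it is not controlled by $\sum_i\alpha_i$ over the climbing intervals. The Young's-inequality repair you sketch does not close either: splitting $2\beta_k XY\le\theta_k X^2+\beta_k^2\theta_k^{-1}Y^2$ with $X=\|\overline{\nabla F}(\mathbf{w}_k)\|_2$, $Y=O(\|\tilde{\mathbf{w}}_k\|_2)$ requires $\theta_k\lesssim\alpha_k$ (to use Theorem~\ref{Theorem:SummableGrad}) while $\beta_k^2\theta_k^{-1}\mathbb{E}\|\tilde{\mathbf{w}}_k\|_2^2$ is summable only if $\delta_2>1+\delta_1$, which is impossible. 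So as written the one-step bound does not telescope to a contradiction.

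The fix --- and the paper's actual argument --- is to never split the drift. By \eqref{Eq:DSG2b}, $\mathbb{E}_{\xi}[\mathbf{w}_{k+1}-\mathbf{w}_k]=-\alpha_k\nabla V(\gamma_k,\mathbf{w}_k)$ exactly, and since $\mathcal{J}(\mathcal{L}\otimes I_{d_w})=0$ one has $\mathcal{J}\nabla V(\gamma_k,\mathbf{w}_k)=\mathcal{J}\nabla F(\mathbf{w}_k)$. Applying the descent lemma to $G(\mathbf{w})=\|\mathcal{J}\nabla F(\mathbf{w})\|_2^2$ (whose gradient is Lipschitz by Assumption~\ref{Assump:LipzHess} and Lemma~\ref{Lemma:0}), the entire first-order drift term becomes $-2\alpha_k\nabla V^\top\mathcal{J}\nabla^2F\,\nabla V$, bounded in magnitude by $2L\alpha_k\|\nabla V(\gamma_k,\mathbf{w}_k)\|_2^2$, which is summable in expectation by \eqref{Eqn:SummableGrad}; the second-order term is handled by \eqref{InEq:DSG3}. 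Lemma~\ref{Lemma:Robbins} then gives convergence of $\mathbb{E}[G(\mathbf{w}_k)]$, and Theorem~\ref{Theorem:SummableGrad} with $\sum_k\alpha_k=\infty$ forces the limit to be zero --- no climbing-interval construction is needed. If you rewrite your expected increment as $-\alpha_k\nabla V$ rather than as two pieces, your expansion collapses to essentially this argument and the problematic $\beta_k$ term never appears.
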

%-------------------------------------------------------------------------------------------------------------------------------
%-------------------------------------------------------------------------------------------------------------------------------
\begin{proof}
  See Appendix~\textsc{F}
\end{proof}

\begin{remark}
Similar to the centralized SGD~\cite{Bottou2018SIAM}, the analysis given here shows the mean-square convergence of the distributed algorithm to a critical point, which include the saddle points. Though SGD has shown to escape saddle points efficiently~\cite{Lee2017, 2019arXiv190200247F, 2019arXiv190204811J}, extension of such results for distributed SGD is currently nonexistent and is the topic of future research.
\end{remark} 
\section{Application to Distributed Supervised Learning}\label{simulation}

We apply the proposed algorithm for distributedly training 10 different neural nets to recognize handwritten digits in images. Specifically, we consider a subset of the MNIST\footnote{\tt\small{http://yann.lecun.com/exdb/mnist/}} data set containing 5000 images of 10 digits (0-9), of which 2500 are used for training and 2500 are used for testing. Training data are divided among ten agents connected in an undirected unweighted ring topology~(see Fig.~\ref{Fig:NNPic}).
%%%%%%%%%%%%%%%%%%%%%%%%%%%%%%%%%%%%%%%%%%%%%%%%%%%%%%%%%%%%%%%%%%%%%%%%%%%%
\begin{figure}[h]
  \begin{centering}
      {\includegraphics[width=.45\textwidth]{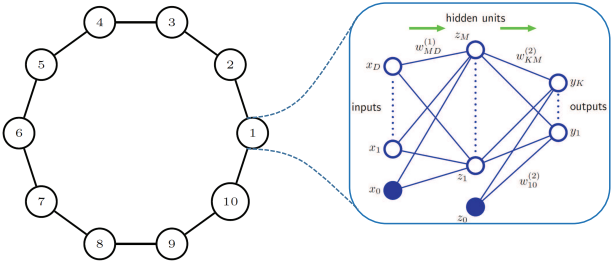}
      \caption{Network of 10 agents, each with its own neural net.}\label{Fig:NNPic}}
  \end{centering}

\end{figure}
%%%%%%%%%%%%%%%%%%%%%%%%%%%%%%%%%%%%%%%%%%%%%%%%%%%%%%%%%%%%%%%%%%%%%%%%%%%%
Each agent aims to train its own neural network consisting of a single hidden layer of 50 neurons (51 including the bias neuron). Since the images are $20\times20$, the input layer consists of 401 neurons (including the one bias neuron) and the output later consists of 10 neurons, one for each output class, i.e., one for each digits 0-9. As shown in Fig.~\ref{Fig:NNPic}, for each agent, the neural net consists of two sets of weights $W^{(1)} \in \mathbb{R}^{50\times401}$ and $W^{(2)} \in \mathbb{R}^{10\times51}$. Here $W^{(1)}$ links the input layer to the hidden layer and $W^{(2)}$ connects the hidden layer to the output later. We use a logistic sigmoid function for both the hidden unit activation and the output unit activation. Therefore, the input to output mapping for the neural net under consideration takes the form
\begin{align*}
    y_{\kappa}\left(\mathbf{x},\mathbf{w}\right) = h\left( \sum_{j=0}^{50} w^{(2)}_{{\kappa},j}\,h\left( \sum_{i=0}^{400} w^{(1)}_{j,i} x_i \right) \right),
\end{align*}
where $\mathbf{x} \in \mathbb{R}^{401}$ is a single image (input) and $y_{\kappa} \in [0,\,\,1]$ for ${\kappa}=0,\ldots,9$, can be interpreted as the conditional probability that the image contains the digit ${\kappa}$ given the input. Finally, the sigmoid function is given as $h(a) = \frac{1}{1+\exp{(-a)}}$. Let $\mathbf{y}^* = \begin{bmatrix} y^*_0, \ldots,y^*_{\kappa},\ldots, y^*_9\end{bmatrix}^\top$ denote the true class or label associated with input image $\mathbf{x}$ (in machine learning community, $\mathbf{y}^*$ is known as the target class or label). For example, if the image $\mathbf{x}$ contains the digit $9$, then $\mathbf{y}^* = \begin{bmatrix} \mathbf{0}_{1\times9} & 1\end{bmatrix}^T$. The conditional distribution of all target classes given inputs can be modeled as (see equation 5.22 of~\cite{bishop2016pattern})
\begin{align*}
    p\left(\mathbf{y}^*|\mathbf{x},\mathbf{w}\right) = \prod_{{\kappa}=0}^9\,y_{\kappa}\left(\mathbf{x},\mathbf{w}\right)^{y^*_{\kappa}}\left(1-y_{\kappa}\left(\mathbf{x},\mathbf{w}\right)\right)^{1-y^*_{\kappa}}.
\end{align*}
Taking the negative logarithm of the corresponding likelihood function yields the following empirical risk function:
\begin{align*}
\bar{R}(\mathbf{w}) &= -\sum_{j=1}^{2500}\,\sum_{{\kappa}=0}^9 \left( y^*_{j{\kappa}} \ln{\left(y_{\kappa}\left(\mathbf{x}_j,\mathbf{w}\right)\right)} \right.\\&\qquad \qquad + \left.(1-y^*_{j{\kappa}}) \ln{\left(1-y_{\kappa}\left(\mathbf{x}_j,\mathbf{w}\right)\right)}\right),
\end{align*}
where $y^*_{j{\kappa}}$ denotes the ${\kappa}$-th entry of $\mathbf{y}^*_j$ and $\mathbf{y}^*_j$ denotes the target class associated with input image $\mathbf{x}_j$. During training, each agent exchanges the weights $W^{(1)}$ and $W^{(2)}$ with its neighbors as described in the proposed algorithm. Here we conduct the following three experiments: (i) centralized SGD, where a centralized version of the SGD is implemented by a central node having all 2500 training data, (ii) a distributed SGD depicted in Fig.~\ref{Fig:NNPic} with equally distributed data, where 10 agents distributedly train 10 different neural nets, and (iii) a distributed SGD with class-specific data distributed among the agents. For experiment (ii), each node received 250 training data, randomly sampled from the entire training set, i.e., $m_i=250$ for all $i =1,\ldots,10$. For experiment (iii), data are distributed such that each agent only receives images corresponding to a particular class, i.e., agent $1$ received all the images of $0$s, agent $2$ received all the images of $1$s, and so forth. Thus for experiment (iii), we have $m_1 = 257$, $m_2 = 235$, $m_3 = 257$, $m_4 = 244$, $m_5 = 242$, $m_6 = 255$, $m_7 = 244$, $m_8 = 259$, $m_9 = 245$, and $m_{10} = 262$. For all three experiments, we select $\alpha_k = \displaystyle \frac{1}{(\varepsilon k + 1)}$, where $\varepsilon = 10^{-5}$. For experiments (ii) and (iii), we select $\beta_k = \displaystyle\frac{b}{(\varepsilon k + 1)^{1/3}}$, where $b = 0.2525$. Note that using a scale factor $\varepsilon$ does not affect the theoretical results provided in the previous sections.
%%%%%%%%%%%%%%%%%%%%%%%%%%%%%%%%%%%%%%%%%%%%%%%%%%%%%%%%%%%%%%%%%%%%%%%%%%%%
\begin{figure}[h]
  \begin{centering}
      \subfigure[Experiment (i)]{
      \psfrag{k}[]{\footnotesize{$k$}}
      \psfrag{J(k)}[][]{\scriptsize{$\bar{R}(\mathbf{w}_k)$}}
      \psfrag{Centralized SGD}[][]{\scriptsize{Centralized SGD }}
      \includegraphics[scale=0.27]{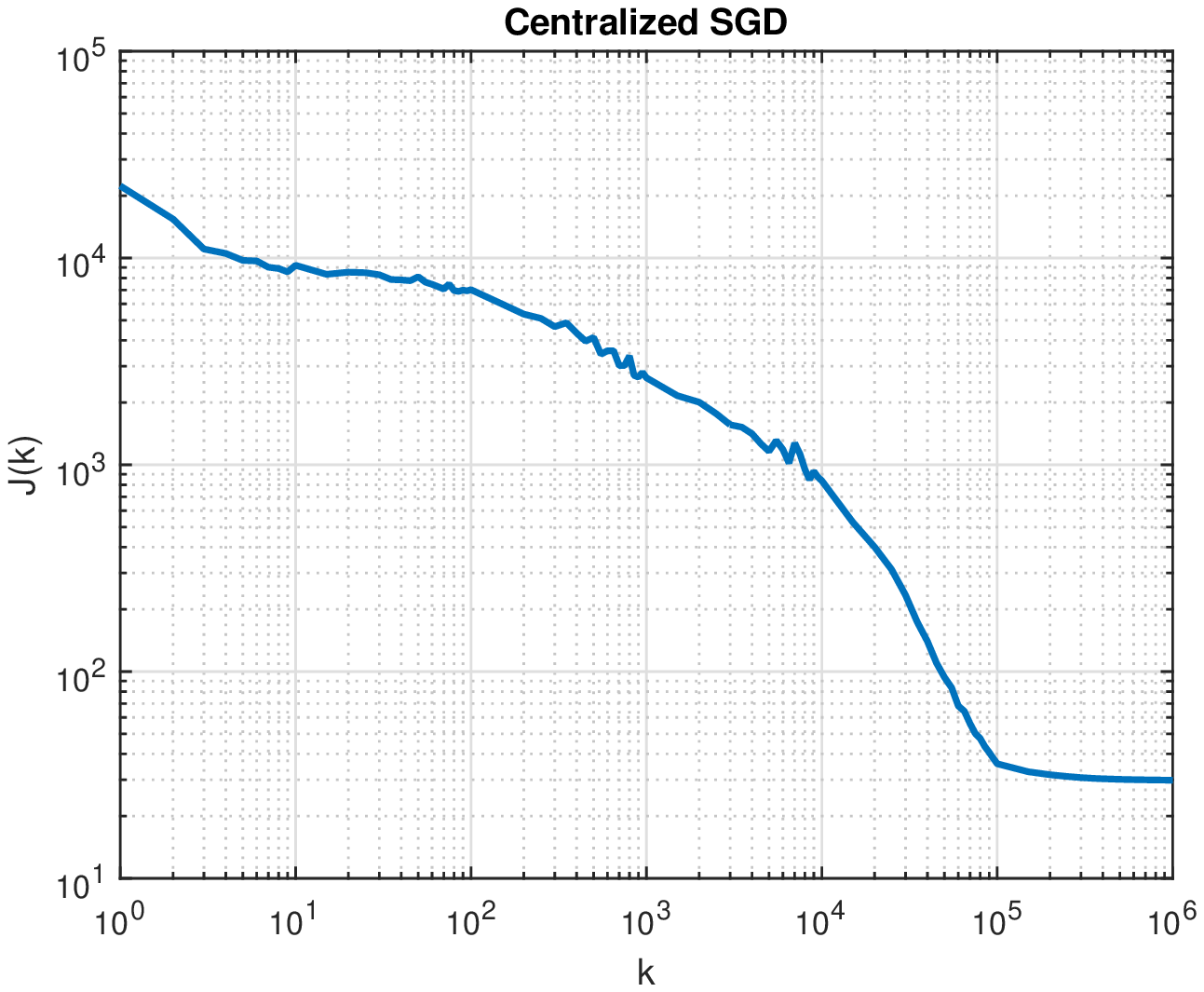}\label{Fig:Cent}}
      \subfigure[Experiment (ii)]{
      \psfrag{k}[][]{\footnotesize{$k$}}
      \psfrag{J(k)}[][]{\scriptsize{$\bar{R}(\mathbf{w}_k)$}}
      \psfrag{Distributed SGD}[][]{\scriptsize{Distributed SGD}}
      \includegraphics[scale=0.29]{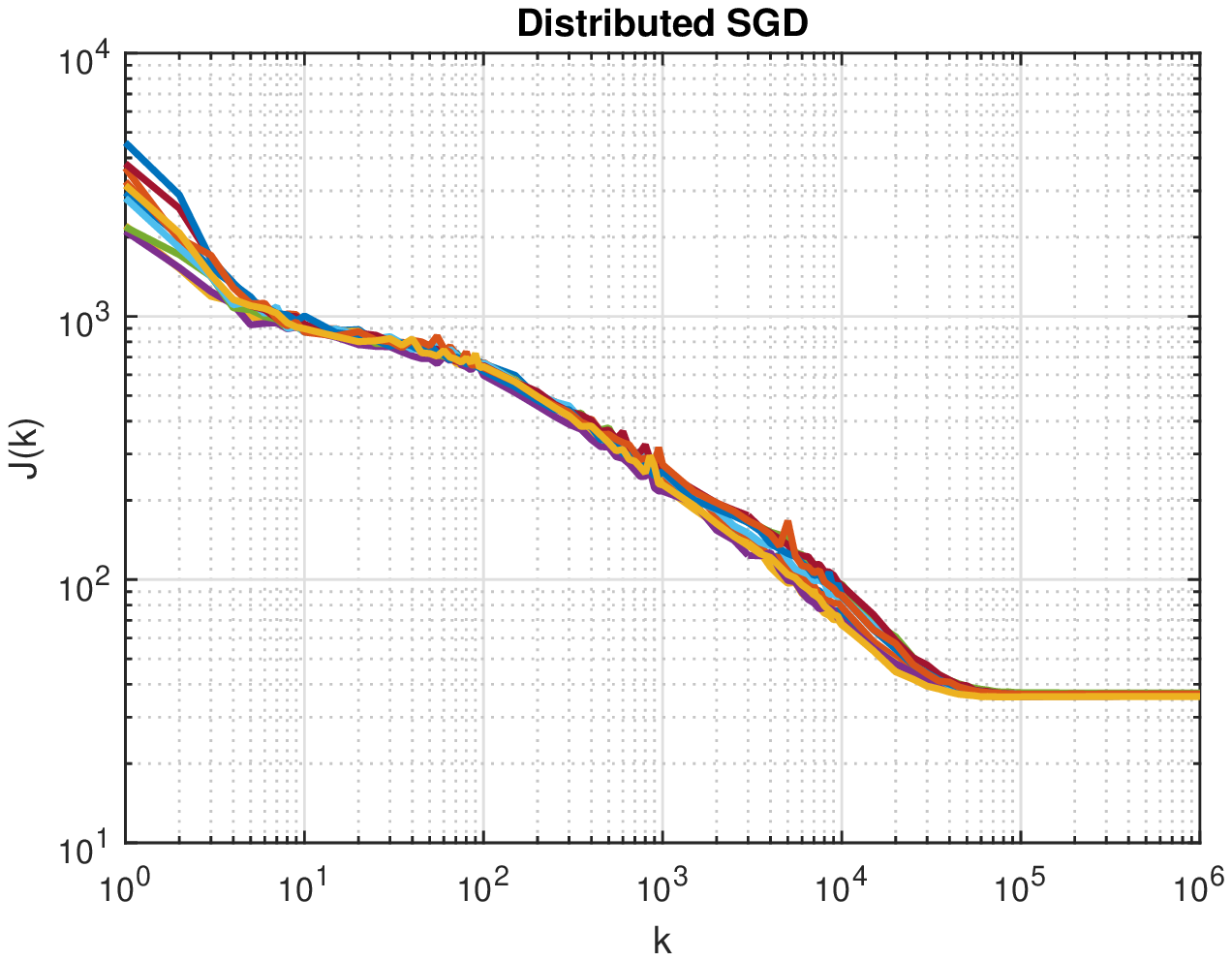}\label{Fig:Dist}}
      \subfigure[Experiment (iii)]{
      \psfrag{k}[][]{\footnotesize{$k$}}
      \psfrag{J(k)}[][]{\scriptsize{$\bar{R}(\mathbf{w}_k)$}}
      \psfrag{Distributed SGD}[][]{\scriptsize{Distributed SGD}}
      \includegraphics[scale=0.27]{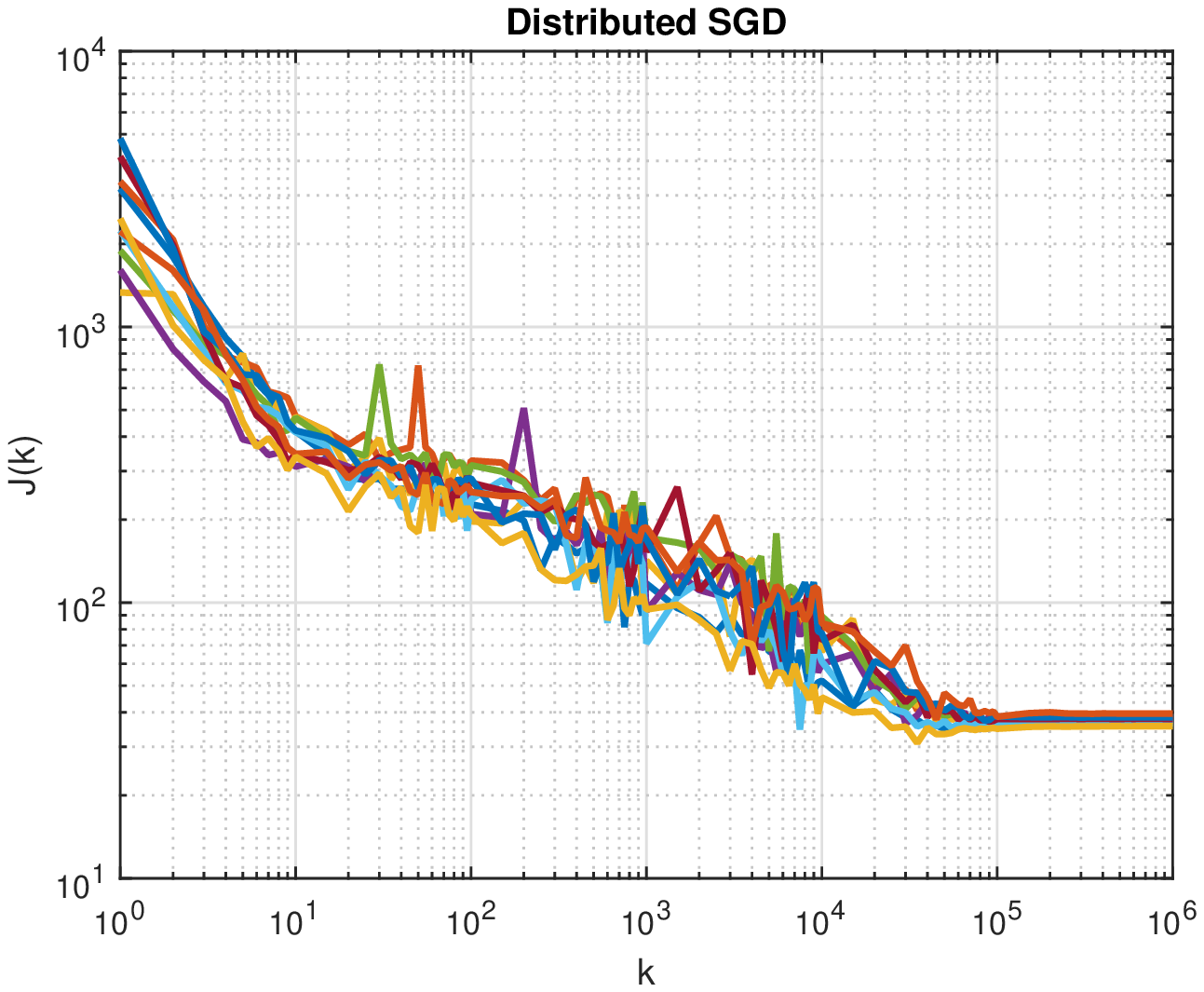}\label{Fig:Dist1}}
      \subfigure[Misclassification examples]{
      \psfrag{7 - 5}[][]{\scriptsize{$7\rightarrow5$}}
      \psfrag{2 - 4}[][]{\scriptsize{$2\rightarrow4$}}
      \psfrag{3 - 7}[][]{\scriptsize{$3\rightarrow7$}}
      \psfrag{4 - 9}[][]{\scriptsize{$4\rightarrow9$}}
      \psfrag{9 - 7}[][]{\scriptsize{$9\rightarrow7$}}
      \psfrag{4 - 9}[][]{\scriptsize{$4\rightarrow9$}}
      \psfrag{5 - 9}[][]{\scriptsize{$5\rightarrow9$}}
      \psfrag{5 - 0}[][]{\scriptsize{$5\rightarrow0$}}
      \psfrag{9 - 3}[][]{\scriptsize{$9\rightarrow3$}}
      \includegraphics[scale=0.29]{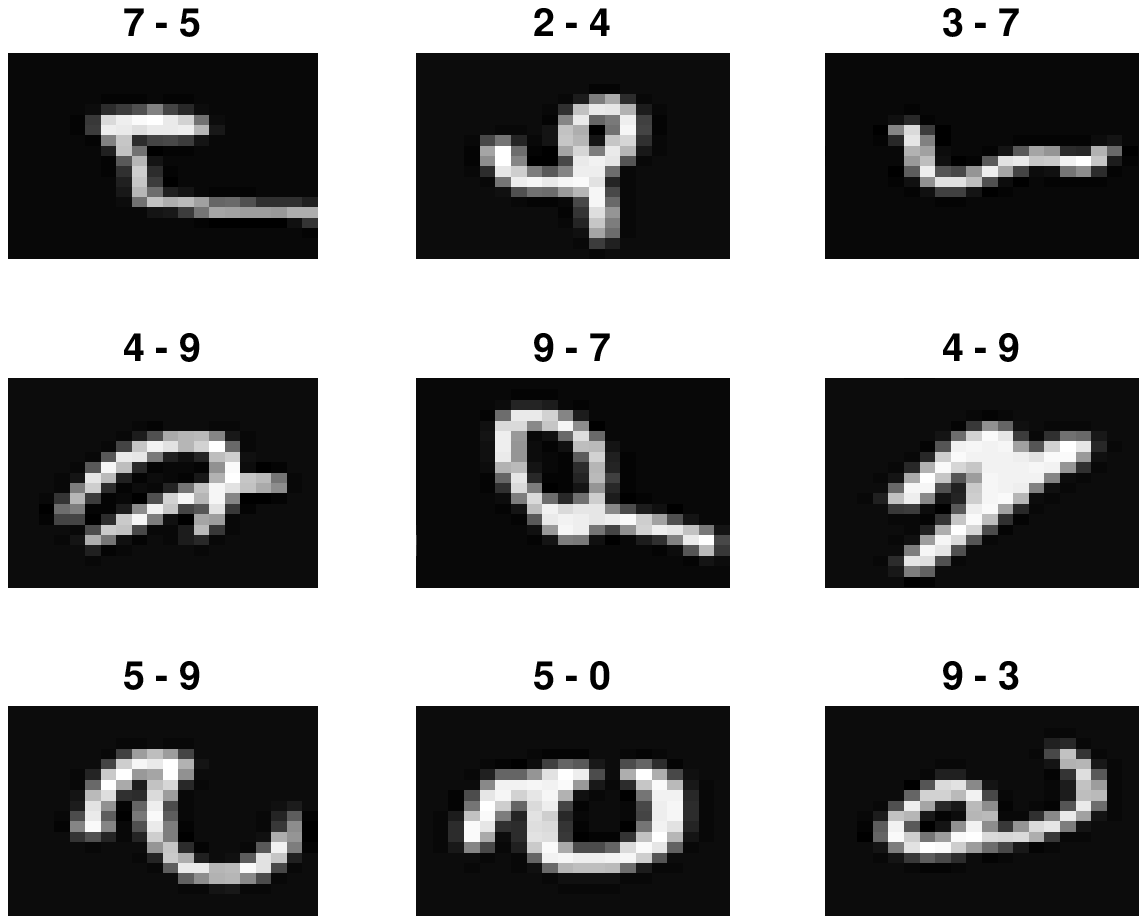}\label{Fig:Mistakes}}
      \caption{Empirical risk for all three experiments and a few misclassification examples.}
      \label{Plots}
  \end{centering}
\end{figure}
%%%%%%%%%%%%%%%%%%%%%%%%%%%%%%%%%%%%%%%%%%%%%%%%%%%%%%%%%%%%%%%%%%%%%%%%%%%%

Given in Fig.~\ref{Plots} are the results obtained from the three experiments. The risks obtained from experiments (i), (ii), and (iii) are given in Figs.~\ref{Fig:Cent}, \ref{Fig:Dist}, and \ref{Fig:Dist1}, respectively. For all three experiments, the error rate, i.e., \% of images misclassified, obtained from running the trained neural net on the testing data of 2500 images are
%\begin{align*}
%  \textnormal{Experiment (i):} \qquad &\textnormal{error \%:\,\,} 7.12 \\
%  \textnormal{Experiment (ii):} \qquad &\textnormal{error \%:\,\,} 7.36 \\
%  \textnormal{Experiment (iii):} \qquad &\textnormal{error \%:\,\,} 7.36
%\end{align*}
\begin{align*}
  \textnormal{Experiments (i): 7.12\%,\,\,\,\,(ii): 7.36\%,\,\,\,\,(iii): 7.36\%}
\end{align*}
Finally, a few misclassification examples are given in Fig.~\ref{Fig:Mistakes}, where a 7 is misclassified as a 5, 2 as a 4, and so forth. Results given here indicate that regardless of how the data are distributed, the agents are able to train their network and the distributedly trained networks are able to yield similar performance as that of a centrally trained network. More importantly, in experiment (iii), agents were able to recognize all 10 classes even though they only had access to data corresponding to a single class. This result has numerous implications for the machine learning community, specifically for federated multi-task learning under information flow constraints.

\section{Conclusion}\label{sec:conclusion}

This paper presented the development of a distributed stochastic gradient descent algorithm for solving non-convex optimization problems. Here we assumed that the local objective functions are Lipschitz continuous and twice continuously differentiable with Lipschitz continuous gradients and Hessians. We provided sufficient conditions on algorithm step-sizes that guarantee asymptotic mean-square convergence of the proposed algorithm to a critical point. We applied the developed algorithm to a distributed supervised-learning problem, in which a set of 10 networked agents collaboratively train their individual neural nets to recognize handwritten digits in images. Results indicate that regardless of how the data are distributed, the agents are able to train their network and the distributedly trained networks are able to yield similar performance as that of a centrally trained network. Numerical results also show that the proposed distributed algorithm allowed individual agents to collaboratively recognize all 10 classes even though they only had access to data corresponding to a single class.

\section*{Appendix}

\subsection{Useful Lemmas}\label{Appdx:A}

%~~~~~~~~~~~~~~~~~~~~~~~~~~~~~~~~~~~~~~~~~~~~~~~~~~~~~~~~~~~~~~~~~~~~~~~~~~~~~~~~~~~~~~~~~~~~~~~~~~~~~~~~~~~~~~~~~~~~~~~~~~~~~~~~~~
\begin{lemma}\label{Lemma:Kar}
  Let $\{z_k\}$ be a non-negative sequence satisfying
\begin{align}\label{Eqn:lamma2}
z_{k+1} \leq \left( 1 - r_1(k)\right)z_k + r_2(k),
\end{align}
where $\{r_1(k)\}$ and $\{r_2(k)\}$ are sequences with
\begin{equation}
    \frac{a_1}{(k+1)^{\epsilon_1}} \leq r_1(k) \leq 1 \quad \textnormal{and} \quad r_2(k) \leq \frac{a_2}{(k+1)^{\epsilon_2}},
\end{equation}
where $0 < a_1$, $0 < a_2$, $0 \leq \epsilon_1 < 1$, and $\epsilon_1 < \epsilon_2$. Then $ (k+1)^{\epsilon_0}z_k \rightarrow 0$ as $k\rightarrow \infty$ for all $0 \leq \epsilon_0 < \epsilon_2 - \epsilon_1$.
\end{lemma}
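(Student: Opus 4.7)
My plan is to reduce the claim to the base case $\epsilon_0 = 0$ (i.e., $z_k \to 0$) via a change of variable, and then establish that base case by a deterministic comparison argument. Specifically, I define $v_k = (k+1)^{\epsilon_0} z_k$; the goal is then to show $v_k \to 0$.

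To derive a recursion for $v_k$, I multiply the hypothesis by $(k+2)^{\epsilon_0}$ and use the elementary bound $(1+1/(k+1))^{\epsilon_0} \leq 1 + C_{\epsilon_0}/(k+1)$, valid for $k$ sufficiently large, giving
\[
v_{k+1} \leq \left(1 + \frac{C_{\epsilon_0}}{k+1}\right)(1 - r_1(k))\, v_k + (k+2)^{\epsilon_0} r_2(k).
\]
Because $r_1(k) \geq a_1/(k+1)^{\epsilon_1}$ with $\epsilon_1 < 1$, the perturbation $C_{\epsilon_0}/(k+1)$ is of strictly smaller order than $r_1(k)$ and can be absorbed into $r_1(k)/2$ for $k$ large. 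Combined with the crude estimate $(k+2)^{\epsilon_0} \leq 2^{\epsilon_0}(k+1)^{\epsilon_0}$, this yields, for all $k$ above some threshold,
\[
v_{k+1} \leq \left(1 - \tfrac{1}{2} r_1(k)\right) v_k + \frac{\tilde{a}}{(k+1)^{\epsilon_2 - \epsilon_0}},
\]
which is a recursion of the same form as the hypothesis, with effective exponents $\tilde{\epsilon}_1 = \epsilon_1$ and $\tilde{\epsilon}_2 = \epsilon_2 - \epsilon_0$. The assumption $\epsilon_0 < \epsilon_2 - \epsilon_1$ preserves the strict inequality $\tilde{\epsilon}_2 > \tilde{\epsilon}_1$.

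It then suffices to prove the base case: any non-negative sequence $\{v_k\}$ satisfying $v_{k+1} \leq (1 - \rho_1(k)) v_k + \rho_2(k)$ with $\rho_1(k) \in [0,1]$, $\sum_k \rho_1(k) = \infty$, and $\rho_2(k)/\rho_1(k) \to 0$ must converge to zero. My approach would be: fix $\eta > 0$ and choose $K$ large enough that $\rho_2(k) \leq \eta\, \rho_1(k)/2$ for all $k \geq K$. Whenever $v_k > \eta$, the recursion gives $v_{k+1} \leq v_k - \eta\, \rho_1(k)/2$, so divergence of $\sum \rho_1(k)$ forces $v_k \leq \eta$ at some point. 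Once $v_k \leq \eta$, the update satisfies $v_{k+1} \leq \eta + \rho_2(k)$, and if $v_{k+1}$ still exceeds $\eta$ the strict-decrease property takes over again. A short induction shows $\limsup v_k \leq \eta + \sup_{j \geq K} \rho_2(j)$; sending $K \to \infty$ and then $\eta \downarrow 0$ gives $v_k \to 0$.

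The main obstacle is handling the $(k+2)^{\epsilon_0}/(k+1)^{\epsilon_0}$ factor cleanly when $\epsilon_0$ is possibly large, and verifying that the induced $O(1/(k+1))$ perturbation of the damping coefficient is truly dominated by $r_1(k) \geq a_1/(k+1)^{\epsilon_1}$; this dominance is exactly where the assumption $\epsilon_1 < 1$ is used. Once this rescaling step is in place, the bootstrapping is conceptually clean: the substitution shifts $\epsilon_2 \mapsto \epsilon_2 - \epsilon_0$ while leaving $\epsilon_1$ untouched, so the structural inequality $\tilde{\epsilon}_2 > \tilde{\epsilon}_1$ needed for the base-case argument is precisely the admissibility range $\epsilon_0 < \epsilon_2 - \epsilon_1$ asserted in the lemma.
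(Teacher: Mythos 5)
Your argument is correct, and it is genuinely more informative than what the paper does: the paper offers no proof at all for this lemma, simply deferring to Lemma 4.1 of the cited reference by Kar et al., whereas you reconstruct a self-contained elementary proof. Your two-step structure is sound. The rescaling step works because $\bigl(1+\tfrac{1}{k+1}\bigr)^{\epsilon_0} \le 1 + \tfrac{C_{\epsilon_0}}{k+1}$ holds for all $k\ge 0$ with a suitable $C_{\epsilon_0}$ (not just for large $k$), and since $r_1(k)\le 1$ the cross term is bounded by $C_{\epsilon_0}/(k+1)$, which is indeed absorbed into $\tfrac12 r_1(k)$ for $k$ large precisely because $\epsilon_1<1$; restricting to $k$ above a threshold is harmless for an asymptotic claim. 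The induced forcing exponent $\epsilon_2-\epsilon_0$ strictly exceeds $\epsilon_1\ge 0$, so $\rho_2/\rho_1\to 0$ and $\rho_2\to 0$, which is all the base case needs. For the base case itself, your argument can be streamlined slightly: once $\rho_2(k)\le \eta\rho_1(k)/2$ for $k\ge K$, the set $\{v_k\le\eta\}$ is absorbing, since $v_k\le\eta$ implies $v_{k+1}\le(1-\rho_1(k))\eta+\eta\rho_1(k)/2=\eta(1-\rho_1(k)/2)\le\eta$; combined with your descent argument (divergence of $\sum\rho_1$ forces entry into this set), this gives $\limsup v_k\le\eta$ directly without the extra $\sup_{j\ge K}\rho_2(j)$ term. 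In short, you have supplied a complete proof where the paper supplies only a citation, and your bootstrapping-by-rescaling viewpoint makes transparent why the admissible range is exactly $\epsilon_0<\epsilon_2-\epsilon_1$.
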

%~~~~~~~~~~~~~~~~~~~~~~~~~~~~~~~~~~~~~~~~~~~~~~~~~~~~~~~~~~~~~~~~~~~~~~~~~~~~~~~~~~~~~~~~~~~~~~~~~~~~~~~~~~~~~~~~~~~~~~~~~~~~~~~~~~
\begin{proof}
This Lemma follows directly from Lemma 4.1 of \cite{Kar2013SIAM}.
\end{proof}

%~~~~~~~~~~~~~~~~~~~~~~~~~~~~~~~~~~~~~~~~~~~~~~~~~~~~~~~~~~~~~~~~~~~~~~~~~~~~~~~~~~~~~~~~~~~~~~~~~~~~~~~~~~~~~~~~~~~~~~~~~~~~~~~~~~
\begin{lemma}\label{Lemma:Robbins}
  Let $\{v_k\}$ be a non-negative sequence for which the following relation hold for all $k \geq 0$:
  \begin{equation}\label{Eq:Robbins}
      v_{k+1} \leq (1 + a_k) v_k - u_k + w_k,
  \end{equation}
  where $a_k\geq 0$, $u_k\geq 0$ and $w_k\geq 0$ with $\sum\limits_{k=0}^{\infty}a_k < \infty $ and $\sum\limits_{k=0}^{\infty}w_k < \infty $. Then the sequence $\{v_k\}$ will converge to $v\geq0$ and we further have $\sum\limits_{k=0}^{\infty}u_k < \infty $.
\end{lemma}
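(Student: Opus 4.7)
The plan is to reduce the recursion to the multiplicative-free case by dividing out the $(1+a_k)$ factors. Set $B_k = \prod_{j=0}^{k-1}(1+a_j)$, with $B_0 = 1$. Since $\sum_k a_k < \infty$, the infinite product converges to a finite limit $B_\infty \leq \exp\!\bigl(\sum_{k=0}^\infty a_k\bigr) < \infty$, and $1 \leq B_k \leq B_\infty$ for every $k$. Defining $\tilde v_k = v_k/B_k$, $\tilde u_k = u_k/B_{k+1}$, and $\tilde w_k = w_k/B_{k+1}$, dividing the hypothesis $v_{k+1} \leq (1+a_k)v_k - u_k + w_k$ by $B_{k+1} = (1+a_k)B_k$ gives the simpler relation
\begin{equation*}
\tilde v_{k+1} \leq \tilde v_k - \tilde u_k + \tilde w_k,
\end{equation*}
where all three sequences are nonnegative and $\sum_k \tilde w_k \leq \sum_k w_k < \infty$ because $B_{k+1} \geq 1$.

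Next, I would introduce the auxiliary nonnegative sequence $y_k = \tilde v_k + \sum_{j=k}^{\infty} \tilde w_j$, which is well-defined since the tail of $\sum \tilde w_k$ is finite. Taking differences and using the simplified recursion yields
\begin{equation*}
y_{k+1} - y_k \;=\; (\tilde v_{k+1} - \tilde v_k) - \tilde w_k \;\leq\; -\tilde u_k \;\leq\; 0,
\end{equation*}
so $y_k$ is nonincreasing and bounded below by $0$, hence converges to some $y_\infty \geq 0$. Because the tail $\sum_{j=k}^{\infty}\tilde w_j \to 0$, this forces $\tilde v_k \to y_\infty$, and multiplying by the convergent factor $B_k \to B_\infty \in (0,\infty)$ gives $v_k \to v := B_\infty y_\infty \geq 0$, proving the first claim.

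For the summability claim, telescope the inequality $y_{k+1} - y_k \leq -\tilde u_k$ from $0$ to $N$:
\begin{equation*}
\sum_{k=0}^{N} \tilde u_k \;\leq\; y_0 - y_{N+1} \;\leq\; y_0 \;=\; \tilde v_0 + \sum_{j=0}^{\infty}\tilde w_j \;<\; \infty.
\end{equation*}
Letting $N \to \infty$ gives $\sum_k \tilde u_k < \infty$, and since $u_k = B_{k+1}\tilde u_k \leq B_\infty \tilde u_k$, we conclude $\sum_k u_k < \infty$.

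The main obstacle is the convergence assertion: the raw recursion $\tilde v_{k+1} \leq \tilde v_k + \tilde w_k$ only delivers boundedness, not convergence, because the positive increments $\tilde w_k$ break monotonicity. The key trick is therefore the construction of $y_k$, which absorbs the future tail of $\{\tilde w_k\}$ to restore monotonicity; the rest is bookkeeping. Reducing the original multiplicative recursion to this additive form via the product $B_k$ is standard, but it is essential that $\sum a_k < \infty$ so that $B_k$ stays bounded and bounded away from zero, otherwise the passage between $v_k$ and $\tilde v_k$ would not preserve convergence or summability.
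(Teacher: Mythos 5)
Your proof is correct and complete. The paper itself offers no argument here --- it simply cites Robbins and Siegmund (1971), whose original result is the almost-supermartingale convergence theorem proved via martingale techniques; your statement is its deterministic specialization. What you supply is the standard elementary proof of that deterministic version: normalize by the convergent product $B_k=\prod_{j<k}(1+a_j)$ to remove the multiplicative factor, then restore monotonicity by augmenting $\tilde v_k$ with the tail $\sum_{j\ge k}\tilde w_j$, so that $y_k$ is nonincreasing and bounded below; convergence of $v_k$ and summability of $u_k$ then follow by telescoping. All steps check out: $1\le B_k\le B_\infty\le\exp(\sum a_k)<\infty$ justifies both the passage to $\tilde v_k$ and the return to $v_k$, and $B_{k+1}\ge1$ gives $\sum\tilde w_k\le\sum w_k$ while $B_{k+1}\le B_\infty$ gives $\sum u_k\le B_\infty\sum\tilde u_k$. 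The benefit of your route is that the lemma becomes self-contained and requires no probabilistic machinery; the cost is nil, since the deterministic case is all the paper uses.
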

%~~~~~~~~~~~~~~~~~~~~~~~~~~~~~~~~~~~~~~~~~~~~~~~~~~~~~~~~~~~~~~~~~~~~~~~~~~~~~~~~~~~~~~~~~~~~~~~~~~~~~~~~~~~~~~~~~~~~~~~~~~~~~~~~~~
\begin{proof}
See \cite{ROBBINS1971233}.
\end{proof}
%~~~~~~~~~~~~~~~~~~~~~~~~~~~~~~~~~~~~~~~~~~~~~~~~~~~~~~~~~~~~~~~~~~~~~~~~~~~~~~~~~~~~~~~~~~~~~~~~~~~~~~~~~~~~~~~~~~~~~~~~~~~~~~~~~~
\begin{lemma}\label{Lemma10}
  Let $\gamma_k \triangleq \displaystyle\frac{a/b}{(k+1)^{\epsilon}}$ with $0 < \epsilon \leq 1$. Then it holds
  \begin{equation}
      \gamma_{k+1}^{-1} - \gamma_{k}^{-1} \leq \frac{2b\epsilon}{a} (k+1)^{\epsilon-1}.
  \end{equation}
\end{lemma}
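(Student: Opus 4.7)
The plan is to prove this by a direct application of the mean value theorem to the function $t \mapsto t^\epsilon$. Since $\gamma_k = (a/b)(k+1)^{-\epsilon}$, its reciprocal is $\gamma_k^{-1} = (b/a)(k+1)^\epsilon$, and hence
\begin{equation*}
\gamma_{k+1}^{-1} - \gamma_k^{-1} \;=\; \frac{b}{a}\bigl[(k+2)^\epsilon - (k+1)^\epsilon\bigr].
\end{equation*}
So the whole problem reduces to bounding $(k+2)^\epsilon - (k+1)^\epsilon$ from above by something of order $(k+1)^{\epsilon-1}$.

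Next I would apply the mean value theorem to $f(t) = t^\epsilon$ on $[k+1,k+2]$: there exists $\xi \in (k+1,k+2)$ with $f(k+2) - f(k+1) = \epsilon \,\xi^{\epsilon-1}$. Because $0 < \epsilon \leq 1$, the exponent $\epsilon - 1$ is non-positive, so $t \mapsto t^{\epsilon-1}$ is non-increasing on $(0,\infty)$. Therefore $\xi^{\epsilon-1} \leq (k+1)^{\epsilon-1}$, which gives
\begin{equation*}
(k+2)^\epsilon - (k+1)^\epsilon \;\leq\; \epsilon (k+1)^{\epsilon-1}.
\end{equation*}

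Combining these two observations yields $\gamma_{k+1}^{-1} - \gamma_k^{-1} \leq \frac{b\epsilon}{a}(k+1)^{\epsilon-1}$, which is even sharper than the claimed bound; slackening by a factor of two gives exactly the stated inequality. There is no real obstacle here: the estimate is essentially a one-line consequence of the MVT together with monotonicity of $t^{\epsilon-1}$ for $\epsilon \leq 1$, and the factor $2$ in the lemma is just a loose constant presumably left in for notational convenience in its later use. An equivalent approach, avoiding the MVT, would be to write $(k+2)^\epsilon - (k+1)^\epsilon = (k+1)^\epsilon\bigl[(1 + \tfrac{1}{k+1})^\epsilon - 1\bigr]$ and invoke the concavity inequality $(1+x)^\epsilon - 1 \leq \epsilon x$ for $x \geq 0$, $\epsilon \in (0,1]$; this produces the identical bound.
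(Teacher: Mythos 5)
Your argument is correct. The reduction to bounding $(k+2)^\epsilon-(k+1)^\epsilon$ is exactly right, the mean value theorem step is applied properly, and the monotonicity of $t\mapsto t^{\epsilon-1}$ for $0<\epsilon\leq 1$ (including the degenerate constant case $\epsilon=1$) justifies replacing $\xi^{\epsilon-1}$ by $(k+1)^{\epsilon-1}$. You even obtain the sharper constant $\tfrac{b\epsilon}{a}$, from which the stated bound follows by slackening. The paper itself offers no argument at all here: it simply declares the lemma ``a direct consequence of Lemma 10 of~\cite{Zeng2018TSP},'' deferring entirely to an external reference. So your proof is not merely a different route but a self-contained replacement for a citation; what it buys is that the lemma becomes verifiable within the paper by a two-line MVT (or, as you note, Bernoulli-type concavity) argument, at no cost in generality. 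The only caveat worth recording is that one cannot certify whether the cited external lemma is stated with the same normalization (e.g.\ whether the factor $2$ there is needed for a slightly different indexing of $\gamma_k$), but for the statement as written in this paper your bound is valid and strictly stronger than required.
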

%~~~~~~~~~~~~~~~~~~~~~~~~~~~~~~~~~~~~~~~~~~~~~~~~~~~~~~~~~~~~~~~~~~~~~~~~~~~~~~~~~~~~~~~~~~~~~~~~~~~~~~~~~~~~~~~~~~~~~~~~~~~~~~~~~~
\begin{proof}
This Lemma is a direct consequence of Lemma 10 of \cite{Zeng2018TSP}.
%First note that $(1+x)^{\epsilon}$ is a monotonically increasing function for all $x\in[0,\,1]$. Thus the following inequality holds for all $ \epsilon\in(0,\,1]$
%\begin{equation}\label{Eq:Lemma10}
%    (1+x)^{\epsilon} - 1 \leq 2\epsilon x, \quad \forall x\in[0,\,1].
%\end{equation}
%Note that
%\begin{align*}
%    \gamma_{k+1}^{-1} - \gamma_{k}^{-1} &= \frac{b}{a} \left( (k+2)^{\epsilon} - (k+1)^{\epsilon} \right)\\
%    &= \frac{b}{a} (k+1)^{\epsilon} \left( \left( 1 + \frac{1}{k+1} \right)^{\epsilon} - 1 \right)
%\end{align*}
%From \eqref{Eq:Lemma10} we have
%\begin{align*}
%    \left( 1 + \frac{1}{k+1} \right)^{\epsilon} - 1 \leq 2\epsilon \frac{1}{k+1}
%\end{align*}
%Therefore
%\begin{align*}
%    \gamma_{k+1}^{-1} - \gamma_{k}^{-1} \leq \frac{2b\epsilon}{a} (k+1)^{\epsilon-1}.
%\end{align*}
\end{proof}

\subsection{Proof of Theorem \ref{Theorem:Consensus}}\label{Appdx:B}
Define the average-consensus error as $\tilde{\mathbf{w}}_k = \left(M \otimes I_{d_w}\right)\mathbf{w}_k$, where $M = I_{n} - \frac{1}{n}\mathbf{1}_{n}\mathbf{1}_{n}^\top$. Thus from \eqref{Eq:DSG1} we have
\begin{equation*}
    \tilde{\mathbf{w}}_{k+1} = \left(\mathcal{W}_k \otimes I_{d_w} \right) \tilde{\mathbf{w}}_k - \alpha_k \left(M \otimes I_{d_w}\right) \mathbf{g}(\mathbf{w}_k,\bm{\xi}_k)
\end{equation*}
and $ \| \tilde{\mathbf{w}}_{k+1} \|_2 \leq \| \left( \left(I_{n}-\beta_k\mathcal{L}\right) \otimes I_{d_w}\right)  \tilde{\mathbf{w}}_k \|_2 + \alpha_k \| \left(M \otimes I_{d_w}\right) \|_2 \| \mathbf{g}(\mathbf{w}_k,\bm{\xi}_k) \|_2.$
Since $\mathbf{1}_{nd_w}^\top \tilde{\mathbf{w}}_k = 0$, it follows from Lemma 4.4 of \cite{Kar2013SIAM} that
\begin{align*}
    \| \left( \left(I_{n}-\beta_k\mathcal{L}\right) \otimes I_{d_w}\right)  \tilde{\mathbf{w}}_k \|_2 \leq (1 - \beta_k\lambda_2(\mathcal{L})) \| \tilde{\mathbf{w}}_k \|_2,
\end{align*}
where $\lambda_2(\cdot)$ denotes the second smallest eigenvalue. Thus we have
\begin{align*}
    \| \tilde{\mathbf{w}}_{k+1} \|_2
    &\leq  (1 - \beta_k\lambda_2(\mathcal{L})) \| \tilde{\mathbf{w}}_k \|_2 + \alpha_k \| \mathbf{g}(\mathbf{w}_k,\bm{\xi}_k) \|_2.
\end{align*}
Now we use the following inequality
\begin{equation}\label{InEqTrick}
    (x+y)^2 \leq (1+\theta)x^2 + \left( 1+ \frac{1}{\theta}\right)y^2,
\end{equation}
for all $x,y,\in\mathbb{R}$ and $\theta > 0$. Selecting $\theta = \beta_k\lambda_2(\mathcal{L})$ yields
\begin{align*}
    &\| \tilde{\mathbf{w}}_{k+1} \|_2^2
    \leq  (1 + \beta_k\lambda_2(\mathcal{L})) (1 - \beta_k\lambda_2(\mathcal{L}))  \| \tilde{\mathbf{w}}_k \|_2^2  \\
    &\qquad \qquad \qquad \qquad + \alpha_k^2 \left( 1 + \frac{1}{\beta_k\lambda_2(\mathcal{L})} \right)  \| \mathbf{g}(\mathbf{w}_k,\bm{\xi}_k) \|_2^2 \\
    &\,\,= (1 - \beta^2_k\lambda_2(\mathcal{L})^2)  \| \tilde{\mathbf{w}}_k \|_2^2  + \alpha_k^2 \left( \frac{1 + \beta_k\lambda_2(\mathcal{L})}{\beta_k\lambda_2(\mathcal{L})} \right)  \| \mathbf{g}(\mathbf{w}_k,\bm{\xi}_k) \|_2^2
\end{align*}
Now taking the expectation yields
\begin{align}\label{wTildeE2}
\begin{split}
   \mathbb{E} \left[ \| \tilde{\mathbf{w}}_{k+1} \|_2^2 \right]
    &\leq   (1 - \beta^2_k\lambda_2(\mathcal{L})^2) \mathbb{E} \left[ \| \tilde{\mathbf{w}}_k \|_2^2 \right]
    \\& + \alpha_k^2 \left( \frac{1 + \beta_k\lambda_2(\mathcal{L})}{\beta_k\lambda_2(\mathcal{L})} \right) \mathbb{E} \left[ \| \mathbf{g}(\mathbf{w}_k,\bm{\xi}_k) \|_2^2 \right]
\end{split}
\end{align}
Using Proposition \ref{Assump:BoundedGrad}, \eqref{wTildeE2} can be written as
\begin{align}\label{wTildeE2a}
\begin{split}
   \mathbb{E} \left[ \| \tilde{\mathbf{w}}_{k+1} \|_2^2 \right]
    &\leq (1 - \beta^2\lambda_2(\mathcal{L})^2) \mathbb{E} \left[ \| \tilde{\mathbf{w}}_k \|_2^2 \right] \\
    &\qquad + \frac{\alpha_k^2}{\beta_k} \left( \frac{ \left(1 + \beta_k\lambda_2(\mathcal{L})\right)\mu_g}{\lambda_2(\mathcal{L})} \right)
\end{split}
\end{align}
Note $\left( \displaystyle \frac{ \left(1 + \beta_k\lambda_2(\mathcal{L})\right)\mu_g}{\lambda_2(\mathcal{L})} \right) \leq \left( \displaystyle \frac{ \left(1 + b\lambda_2(\mathcal{L})\right)\mu_g}{\lambda_2(\mathcal{L})} \right) \triangleq \mu_a,$ for some $\mu_a > 0$. Let $r_1(k) = \beta^2_k\lambda_2(\mathcal{L})^2 = \displaystyle\frac{b^2\lambda_2(\mathcal{L})^2}{(k+1)^{2\delta_1}}$ and $r_2(k) = \frac{\alpha_k^2}{\beta_k} \left( \frac{ \left(1 + \beta_k\lambda_2(\mathcal{L})\right)\mu_g}{\lambda_2(\mathcal{L})} \right) \leq \frac{a^2\mu_a/b}{(k+1)^{2\delta_2-\delta_1}}$. Now \eqref{wTildeE2a} can be written in the form of \eqref{Eqn:lamma2} with  $\epsilon_1 = 2\delta_1$ and $\epsilon_2 = 2\delta_2-\delta_1$. Thus it follows from Lemma \ref{Lemma:Kar} that
\begin{align*}
    (k+1)^{\delta_0}\, \mathbb{E} \left[ \| \tilde{\mathbf{w}}_k \|_2^2 \right] \rightarrow 0
    \quad \textnormal{as}\quad k\rightarrow \infty, \quad \forall\,0 \leq \delta_0 < 2\delta_2-3\delta_1.
\end{align*}
Thus there exists a constant $0 < \mu_{w} < \infty$ such that for all $k\geq 0$
\begin{align}\label{Eq:MSbound1}
    \mathbb{E} \left[ \| \tilde{\mathbf{w}}_k \|_2^2 \right] \leq \mu_w \frac{1}{(k+1)^{\delta_0}}, \quad \forall\,0 \leq \delta_0 < 2\delta_2-3\delta_1.
\end{align}
Now \eqref{Eq:MSbound1a} follows from Assumption~\ref{Assump:AlphaBeta} that $\delta_2 > 3\delta_1$.
% and
%\begin{align*}
%    \mathbb{E} \left[ \| \tilde{\mathbf{w}}_k \|_2^2 \right] = O\left(\displaystyle\frac{1}{(k+1)^{\delta_2}}\right).
%\end{align*}

\subsection{Proof of Theorem \ref{Theorem:InfSum}}\label{Appdx:C}
From \eqref{dVk} we have
\begin{align*}
&\nabla V(\gamma_k,\mathbf{w}_{k+1}) - \nabla V(\gamma_k,\mathbf{w}_k) = \nabla F(\,\mathbf{w}_{k+1}\,) - \nabla F(\,\mathbf{w}_k\,)  \\
&\qquad \qquad + \frac{1}{\gamma_k}  \left(\mathcal{L}\otimes I_{d_w}\right)\mathbf{w}_{k+1} -  \frac{1}{\gamma_k} \left(\mathcal{L}\otimes I_{d_w}\right)\mathbf{w}_k
\end{align*}
Now based on Assumption \ref{Assump:Lipz}, for a fixed $\gamma_k$, $\nabla V(\gamma_k,\,\mathbf{w}\,)$ is Lipschitz continuous in $\mathbf{w}$. Thus we have
\begin{align}
\begin{split}
  \| \nabla V(\gamma_k,\mathbf{w}_{k+1}) - &\nabla V(\gamma_k,\mathbf{w}_k)\|_2 \\
  %&\leq \| \nabla F(\,\mathbf{w}_{k+1}\,) - \nabla F(\,\mathbf{w}_k\,) \|_2 + \frac{1}{\gamma_k}\, \sigma_{\max}(\mathcal{L}) \| \mathbf{w}_{k+1} - \mathbf{w}_{k} \|_2\\
&\quad \leq  \left( L + \frac{\sigma_{\max}(\mathcal{L})}{\gamma_k} \right) \|\mathbf{w}_{k+1}-\mathbf{w}_k\|_2 \label{Eq:dV0}
\end{split}
\end{align}
It follows from Lemma~\ref{Lemma:Lipz} that
\begin{align}
%V(\gamma_k,\mathbf{w}_{k+1}) - V(\gamma_k,\mathbf{w}_k) &\leq \nabla V(\gamma_k,\mathbf{w}_k)^\top \left( \mathbf{w}_{k + 1} - \mathbf{w}_{k} \right)  + \frac{1}{2}\left( L + \frac{\sigma_{\max}(\mathcal{L})}{\gamma_k} \right) \| \mathbf{w}_{k + 1} - \mathbf{w}_{k} \|_2^2\\
\begin{split}
&V(\gamma_k,\mathbf{w}_{k+1}) - V(\gamma_k,\mathbf{w}_k)
  \leq \frac{1}{2}\left( L + \frac{\sigma_{\max}(\mathcal{L})}{\gamma_k} \right) \| \mathbf{w}_{k + 1} - \mathbf{w}_{k} \|_2^2  \\
  &+ \qquad \left( \nabla F(\mathbf{w}_k) + \frac{1}{\gamma_k} \left(\mathcal{L}\otimes I_{d_w}\right)\mathbf{w}_k \right)^\top \left( \mathbf{w}_{k + 1} - \mathbf{w}_{k} \right)
\end{split}\label{Eq:dV}
\end{align}
Note that the distributed SGD algorithm in \eqref{Eq:DSG1} can be rewritten as
\begin{align}\label{Eq:DSG2}
  \mathbf{w}_{k + 1} - \mathbf{w}_{k} &= - \alpha_k \left( \mathbf{g}(\mathbf{w}_k,\bm{\xi}_k) + \frac{1}{\gamma_k} \left(\mathcal{L}\otimes I_{d_w}\right)\mathbf{w}_k \right).
\end{align}
Substituting \eqref{Eq:DSG2} into \eqref{Eq:dV} and taking the conditional expectation $\mathbb{E}_{\xi}\left[ \,\cdot\, \right]$ yields
%\begin{align}
%\begin{split}
%  V(\gamma_k,\mathbf{w}_{k+1}) - V(\gamma_k,\mathbf{w}_k) &\leq - \alpha_k \left( \nabla F(\mathbf{w}_k) + \frac{1}{\gamma_k} \left(\mathcal{L}\otimes I_{d_w}\right)\mathbf{w}_k \right)^\top \left( \mathbf{g}(\mathbf{w}_k,\bm{\xi}_k) + \frac{1}{\gamma_k} \left(\mathcal{L}\otimes I_{d_w}\right)\mathbf{w}_k \right)  \\
%  &\qquad\qquad + \frac{\alpha_k^2}{2}\left( L + \frac{\sigma_{\max}(\mathcal{L})}{\gamma_k} \right) \| \mathbf{g}(\mathbf{w}_k,\bm{\xi}_k) + \frac{1}{\gamma_k} \left(\mathcal{L}\otimes I_{d_w}\right)\mathbf{w}_k \|_2^2
%\end{split}\label{Eq:dV0}
%\end{align}
%Now
\begin{align}
\begin{split}
  &\mathbb{E}_{\xi}\left[V(\gamma_k,\mathbf{w}_{k+1})\right] - V(\gamma_k,\mathbf{w}_k) \leq   \frac{\alpha_k^2}{2}\left( L + \frac{\sigma_{\max}(\mathcal{L})}{\gamma_k} \right)\\
  &\quad\times \mathbb{E}_{\xi}\left[ \| \mathbf{g}(\mathbf{w}_k,\bm{\xi}_k) + \frac{1}{\gamma_k} \left(\mathcal{L}\otimes I_{d_w}\right)\mathbf{w}_k \|_2^2 \right]\\
  & - \alpha_k \left( \nabla F(\mathbf{w}_k) + \frac{1}{\gamma_k} \left(\mathcal{L}\otimes I_{d_w}\right)\mathbf{w}_k \right)^\top \\
  &\quad \times\left( \mathbb{E}_{\xi}\left[ \mathbf{g}(\mathbf{w}_k,\bm{\xi}_k) \right] + \frac{1}{\gamma_k} \left(\mathcal{L}\otimes I_{d_w}\right)\mathbf{w}_k \right) \quad \textnormal{a.s.}
\end{split}\label{Eq:dV0}
\end{align}
Based on Assumption \ref{Assump:Grad1}, there exists $\mu > 0$ such that
\begin{align*}
  &\left( \nabla F(\mathbf{w}_k) + \frac{1}{\gamma_k} \left(\mathcal{L}\otimes I_{d_w}\right)\mathbf{w}_k \right)^\top  \bigg(  \mathbb{E}_{\xi}\left[\, \mathbf{g}(\mathbf{w}_k,\bm{\xi}_k) \,\right] + \\
  &\left.\frac{1}{\gamma_k} \left(\mathcal{L}\otimes I_{d_w}\right)\mathbf{w}_k \right) \geq \mu \left\|  \nabla F(\mathbf{w}_k) + \frac{1}{\gamma_k} \left(\mathcal{L}\otimes I_{d_w}\right)\mathbf{w}_k \right\|^2_2,\,\, \textnormal{a.s.}
  \end{align*}
Thus we have
\begin{align}
\begin{split}
  &\mathbb{E}_{\xi}\left[V(\gamma_k,\mathbf{w}_{k+1})\right] - V(\gamma_k,\mathbf{w}_k) \leq   \frac{\alpha_k^2}{2}\left( L + \frac{\sigma_{\max}(\mathcal{L})}{\gamma_k} \right)\\
  &\quad\times \mathbb{E}_{\xi}\left[ \| \mathbf{g}(\mathbf{w}_k,\bm{\xi}_k) + \frac{1}{\gamma_k} \left(\mathcal{L}\otimes I_{d_w}\right)\mathbf{w}_k \|_2^2 \right]\\
  & - \alpha_k \mu \left\|  \nabla F(\mathbf{w}_k) + \frac{1}{\gamma_k} \left(\mathcal{L}\otimes I_{d_w}\right)\mathbf{w}_k \right\|^2_2\,\, \textnormal{a.s.}
\end{split}\label{Eq:dV1}
\end{align}
Let
\begin{equation}\label{ck}
  c_k \triangleq \left( \alpha_k L + \sigma_{\max}(\mathcal{L})\beta_k \right).
\end{equation}
Now \eqref{Eq:dV1} can be written as
\begin{align}
\begin{split}
  &\mathbb{E}_{\xi} \left[ V(\gamma_k,\mathbf{w}_{k+1}) \right] - V(\gamma_k,\mathbf{w}_k) \leq \frac{1}{2} \alpha_k c_k \mathbb{E}_{\xi} \left[ \| \mathbf{g}(\mathbf{w}_k,\bm{\xi}_k) \right. \\
&\left.+ \frac{1}{\gamma_k} \left(\mathcal{L}\otimes I_{d_w}\right)\mathbf{w}_k \|_2^2 \right] - \alpha_k \mu \left\|  \nabla F(\mathbf{w}_k) + \frac{1}{\gamma_k} \left(\mathcal{L}\otimes I_{d_w}\right)\mathbf{w}_k \right\|^2_2
\end{split}\label{Eq:dV1a}
\end{align}
Based on Assumptions~\ref{Assump:Grad1} and \ref{Assump:Grad2}, there exists scalars ${\mu}_{v_1} \geq 0$ and ${\mu}_{v_2} \geq 0$ such that
\begin{align}
\begin{split}
&\mathbb{E}_{\xi} \left[ \| \mathbf{g}(\mathbf{w}_k,\bm{\xi}_k) + \frac{1}{\gamma_k} \left(\mathcal{L}\otimes I_{d_w}\right)\mathbf{w}_k \|_2^2 \right] \leq
\mu_{v_1} \\&+ \mu_{v_2} \left\|  \nabla F(\mathbf{w}_k) + \frac{1}{\gamma_k} \left(\mathcal{L}\otimes I_{d_w}\right)\mathbf{w}_k \right\|^2_2 \quad \textnormal{a.s.}
\end{split}\label{Eq:2ndGrad1}
\end{align}
Thus from \eqref{Eq:dV1a} we have
\begin{align}
\begin{split}
  &\mathbb{E}_{\xi} \left[ V(\gamma_k,\mathbf{w}_{k+1}) \right] - V(\gamma_k,\mathbf{w}_k) \leq \left( \frac{1}{2}c_k \mu_{v_2} -  \mu \right)\alpha_k \\
  &\times\left\|  \nabla F(\mathbf{w}_k) + \frac{1}{\gamma_k} \left(\mathcal{L}\otimes I_{d_w}\right)\mathbf{w}_k \right\|^2_2  + \frac{1}{2} c_k\alpha_k\mu_{v_1}\quad \textnormal{a.s.}
\end{split}\label{Eq:dV2}
\end{align}
Substituting $\nabla V(\gamma_k, \mathbf{w}_k) = \nabla F(\mathbf{w}_k) + \displaystyle\frac{1}{\gamma_k} \left(\mathcal{L}\otimes I_{d_w}\right)\mathbf{w}_k$ and taking the total expectation of \eqref{Eq:dV2} yields
\begin{align}
\begin{split}
  &\mathbb{E}\left[ V(\gamma_k,\mathbf{w}_{k+1}) \right] - \mathbb{E}\left[ V(\gamma_k,\mathbf{w}_k)\right] \leq - \left( \mu - \frac{1}{2} c_k \mu_{v_2} \right)\alpha_k \\
  &\times \mathbb{E}\left[ \left\|  \nabla V(\gamma_k, \mathbf{w}_k)  \right\|^2_2 \right] + \frac{1}{2}c_k \alpha_k \mu_{v_1}
\end{split}\label{Eq:dV3}
\end{align}
Note that
\begin{align}
\begin{split}
  &V(\gamma_{k+1},\mathbf{w}_{k+1}) = V(\gamma_k,\mathbf{w}_{k+1})\\ &\qquad + \frac{1}{2} \left( \gamma_{k+1}^{-1} - \gamma_{k}^{-1} \right) \mathbf{w}^\top_{k+1} \left(\mathcal{L}\otimes I_{d_w}\right)\mathbf{w}_{k+1}
\end{split}\label{Eq:deltaV}
\end{align}
Combining \eqref{Eq:dV3} and \eqref{Eq:deltaV} yields
\begin{align}
\begin{split}
  &\mathbb{E}\left[ V(\gamma_{k+1},\mathbf{w}_{k+1}) \right] - \mathbb{E}\left[ V(\gamma_k,\mathbf{w}_k)\right] \leq \\
  &- \left( \mu - \frac{1}{2} c_k \mu_{v_2} \right)\alpha_k  \mathbb{E}\left[ \left\|  \nabla V(\gamma_k, \mathbf{w}_k)  \right\|^2_2 \right]  + \frac{1}{2}c_k \alpha_k \mu_{v_1} \\
  & \qquad + \frac{1}{2} \left( \gamma_{k+1}^{-1} - \gamma_{k}^{-1} \right) \mathbb{E}\left[ \mathbf{w}^\top_{k+1} \left(\mathcal{L}\otimes I_{d_w}\right)\mathbf{w}_{k+1} \right]
\end{split}\label{Eq:dV4}
\end{align}
\noindent If we select $\epsilon = \delta_2-\delta_1$, it follows directly from Lemma~\ref{Lemma10} that
  \begin{equation}
      \gamma_{k+1}^{-1} - \gamma_{k}^{-1} \leq \frac{2b\left(\delta_2-\delta_1\right)}{a} (k+1)^{\delta_2-\delta_1-1}.
  \end{equation}
Note that from Lemma~\ref{Lemma2} we have
$    \mathbf{w}^\top_{k+1} \left(\mathcal{L}\otimes I_{d_w}\right)\mathbf{w}_{k+1} =
    \tilde{\mathbf{w}}^\top_{k+1} \left(\mathcal{L}\otimes I_{d_w}\right)\tilde{\mathbf{w}}_{k+1}
    \leq \sigma_{\max}\left( \mathcal{L} \right) \| \tilde{\mathbf{w}}_{k+1} \|_2^2$. Thus
\begin{align*}
    &\frac{1}{2} \left( \gamma_{k+1}^{-1} - \gamma_{k}^{-1} \right) \mathbb{E}\left[ \mathbf{w}^\top_{k+1} \left(\mathcal{L}\otimes I_{d_w}\right)\mathbf{w}_{k+1} \right]
    \leq  \frac{2b\left(\delta_2-\delta_1\right)}{a} \\
    &\times (k+1)^{\delta_2-\delta_1-1} \sigma_{\max}\left( \mathcal{L} \right)  \mathbb{E}\left[ \| \tilde{\mathbf{w}}_{k+1} \|_2^2 \right]
\end{align*}
We have established in \eqref{Eq:MSbound1} that for all $k\geq 0$
\begin{align}
    \mathbb{E} \left[ \| \tilde{\mathbf{w}}_k \|_2^2 \right] \leq \mu_w \frac{1}{(k+1)^{\delta_0}}, \quad \forall\,0 \leq \delta_0 < 2\delta_2-3\delta_1.
\end{align}
Therefore we have
\begin{align*}
    &\frac{1}{2} \left( \gamma_{k+1}^{-1} - \gamma_{k}^{-1} \right) \mathbb{E}\left[ \mathbf{w}^\top_{k+1} \left(\mathcal{L}\otimes I_{d_w}\right)\mathbf{w}_{k+1} \right]
    \leq \\ & \frac{2b\left(\delta_2-\delta_1\right)}{a} (k+1)^{\delta_2-\delta_1-1} \sigma_{\max}\left( \mathcal{L} \right)  \mu_w \frac{1}{(k+1)^{\delta_0}}
\end{align*}
Let $\mu_c = \frac{2b\left(\delta_2-\delta_1\right)}{a} \sigma_{\max}\left( \mathcal{L} \right)  \mu_w$. Now selecting $\delta_0 = 2\delta_2-3\delta_1-\varepsilon$, where $0 < \varepsilon \ll \delta_1 $, yields
\begin{align*}
    &\frac{1}{2} \left( \gamma_{k+1}^{-1} - \gamma_{k}^{-1} \right) \mathbb{E}\left[ \mathbf{w}^\top_{k+1} \left(\mathcal{L}\otimes I_{d_w}\right)\mathbf{w}_{k+1} \right]
    \leq  \\
    &\mu_c (k+1)^{\delta_2-\delta_1-1-2\delta_2+3\delta_1+\varepsilon} = \mu_c (k+1)^{-\delta_2+2\delta_1-1+\varepsilon}
\end{align*}
Thus if we select $\delta_1$ and $\delta_2$ such that $\delta_2 > 2\delta_1 + \varepsilon$, then we have
\begin{equation}
    \frac{1}{2} \left( \gamma_{k+1}^{-1} - \gamma_{k}^{-1} \right) \mathbb{E}\left[ \mathbf{w}^\top_{k+1} \left(\mathcal{L}\otimes I_{d_w}\right)\mathbf{w}_{k+1} \right]
    \leq  \mu_c \frac{1}{(k+1)^{1+\varepsilon_1}},
\end{equation}
where $\varepsilon_1 > 0$ and $\delta_2 - 2\delta_1 - \varepsilon = \varepsilon_1$. Now we can write \eqref{Eq:dV4} as
\begin{align}
\begin{split}
  &\mathbb{E}\left[ V(\gamma_{k+1},\mathbf{w}_{k+1}) \right] - \mathbb{E}\left[ V(\gamma_k,\mathbf{w}_k)\right] \leq - \left( \mu - \frac{1}{2} c_k \mu_{v_2} \right)\\
  &\times \alpha_k \mathbb{E}\left[ \left\|  \nabla V(\gamma_k, \mathbf{w}_k)  \right\|^2_2 \right] + \frac{1}{2}c_k \alpha_k \mu_{v_1} + \mu_c \frac{1}{(k+1)^{1+\varepsilon_1}}
\end{split}\label{Eq:dV5}
\end{align}
Since $c_k$ is decreasing to zero, for sufficiently large $k$, we have $c_k \mu_{v_2} < \mu$. Therefore $\left( \mu - \frac{1}{2} c_k \mu_{v_2} \right) > \frac{1}{2} \mu$ for sufficiently large $k$. Thus we have
\begin{align}
\begin{split}
  &\mathbb{E}\left[ V(\gamma_{k+1},\mathbf{w}_{k+1}) \right] - \mathbb{E}\left[ V(\gamma_k,\mathbf{w}_k)\right] \leq - \frac{1}{2} \mu\alpha_k\\
  &\times \mathbb{E}\left[ \left\|  \nabla V(\gamma_k, \mathbf{w}_k)  \right\|^2_2 \right] + \frac{1}{2}c_k \alpha_k \mu_{v_1}    +  \frac{\mu_c}{(k+1)^{1+\varepsilon_1}}
\end{split}\label{Eq:dV6}
\end{align}
Now \eqref{Eq:dV6} can be written in the form of \eqref{Eq:Robbins} after selecting $a_k = 0$,
\begin{align}
    w_k &= \frac{1}{2}c_k \alpha_k \mu_{v_1}   +  \frac{\mu_c}{(k+1)^{1+\varepsilon_1}},\\
    u_k &= \frac{1}{2} \mu\alpha_k \mathbb{E}\left[ \left\|  \nabla V(\gamma_k, \mathbf{w}_k)  \right\|^2_2 \right].
\end{align}
Note that here we have $a_k = 0$, $u_k\geq 0$ and $w_k\geq 0$ with $\sum\limits_{k=0}^{\infty}a_k < \infty $ and $\sum\limits_{k=0}^{\infty}w_k < \infty $. Note $c_k\alpha_k$ is summable because $\alpha_k\beta_k$ is summable and $\alpha_k$ is square-summable. Therefore from Lemma~\ref{Lemma:Robbins} we have $\mathbb{E}\left[ V(\gamma_k,\mathbf{w}_k)\right]$ is a convergent sequence and  $\sum\limits_{k=0}^{\infty} \,\frac{1}{2} \mu \alpha_k \mathbb{E}\left[ \left\|  \nabla V(\gamma_k,\mathbf{w}_k)  \right\|^2_2 \right] < \infty $.

\subsection{Proof of Theorem \ref{Theorem:Convergence}}\label{Appdx:D}
Note that
    \begin{align}\label{Eq:DSG3}
      \left\| \mathbf{w}_{k + 1} - \mathbf{w}_{k} \right\|_2^2 &= \alpha_k^2  \left\| \mathbf{g}(\mathbf{w}_k,\bm{\xi}_k) + \frac{1}{\gamma_k} \left(\mathcal{L}\otimes I_{d_w}\right)\mathbf{w}_k \right\|_2^2.
    \end{align}
    Now form \eqref{Eq:2ndGrad1}, using the tower rule yields
    \begin{align}
    \begin{split}
    &\mathbb{E} \left[ \| \mathbf{g}(\mathbf{w}_k,\bm{\xi}_k) + \frac{1}{\gamma_k} \left(\mathcal{L}\otimes I_{d_w}\right)\mathbf{w}_k \|_2^2 \right] \leq
    \mu_{v_1} \\&+ \mu_{v_2} \mathbb{E} \left[\left\|  \nabla F(\mathbf{w}_k) + \frac{1}{\gamma_k} \left(\mathcal{L}\otimes I_{d_w}\right)\mathbf{w}_k \right\|^2_2 \right]
    \end{split}\label{Eq:2ndGrad2}
    \end{align}
    Now taking the expectation of \eqref{Eq:DSG3} and substituting \eqref{Eq:2ndGrad2} yields
    \begin{align}\label{InEq:DSG1}
      \mathbb{E} \left[ \left\| \mathbf{w}_{k + 1} - \mathbf{w}_{k} \right\|_2^2 \right] &\leq \alpha_k^2 \mu_{v_1} + \alpha_k^2 \mu_{v_2} \mathbb{E} \left[\left\|  \nabla V(\mathbf{w}_k) \right\|^2_2 \right].
    \end{align}
    Thus we have
    \begin{align}\label{InEq:DSG2}
    \begin{split}
      &\sum\limits_{k=0}^{\infty} \, \mathbb{E} \left[ \left\| \mathbf{w}_{k + 1} - \mathbf{w}_{k} \right\|_2^2 \right] \leq \sum\limits_{k=0}^{\infty} \,\left(\alpha_k^2 \mu_{v_1}\right) \\
      &\qquad \qquad + \sum\limits_{k=0}^{\infty} \, \left(\alpha_k^2 \mu_{v_2} \mathbb{E} \left[\left\|  \nabla V(\mathbf{w}_k) \right\|^2_2 \right]\right).
    \end{split}
    \end{align}
    Now \eqref{InEq:DSG3} follows from \eqref{Eqn:SummableGrad} and from noting that $\alpha_k$ is square summable. Furthermore, since every summable sequence is convergent, we have \eqref{ConvergenceEqn}.

\subsection{Proof of Theorem~\ref{Theorem:SummableGrad}}\label{Appdx:E}
Taking the conditional expectation $\mathbb{E}_{\xi}[\cdot]$ of \eqref{Eq:DSG2}  yields
%\begin{align}\label{Eq:DSG2a}
%  \mathbb{E}_{\xi}\left[\, \mathbf{w}_{k + 1} \,\right] - \mathbf{w}_{k} &= - \alpha_k \left( \mathbb{E}_{\xi}\left[\, \mathbf{g}(\mathbf{w}_k,\bm{\xi}_k) \,\right] + \frac{1}{\gamma_k} \left(\mathcal{L}\otimes I_{d_w}\right)\mathbf{w}_k \right)\\
%  &= - \alpha_k \left( \nabla F(\,\mathbf{w}_k\,) + \frac{1}{\gamma_k} \left(\mathcal{L}\otimes I_{d_w}\right)\mathbf{w}_k \right)\quad \textnormal{a.s.}\\
%  &= - \alpha_k \nabla V(\gamma_k,\mathbf{w}_k) \quad \textnormal{a.s.}
%\end{align}
%Thus we have
\begin{align}\label{Eq:DSG2b}
  \mathbb{E}_{\xi}\left[\, \mathbf{w}_{k + 1}  - \mathbf{w}_{k} \,\right]
  &=  - \alpha_k \nabla V(\gamma_k,\mathbf{w}_k) \quad \textnormal{a.s.}
\end{align}
Thus we have
\begin{align}\label{Eq:DSG23}
  \left\| \mathbb{E}_{\xi}\left[\, \mathbf{w}_{k + 1}  - \mathbf{w}_{k} \,\right] \right\|^2_2
  &=  \alpha_k^2  \left\| \nabla V(\gamma_k,\mathbf{w}_k) \right\|^2_2 \quad \textnormal{a.s.}
\end{align}
Therefore
\begin{align}\label{Eq:DSG2d}
  \mathbb{E}\left[ \left\|  \nabla V(\gamma_k,\mathbf{w}_k)  \right\|^2_2 \right] = \alpha_k^{-2} \, \mathbb{E}\left[ \left\| \mathbb{E}_{\xi}\left[\, \mathbf{w}_{k + 1}  - \mathbf{w}_{k} \,\right] \right\|^2_2 \right]
\end{align}
Substituting \eqref{Eq:DSG2d} into \eqref{Eqn:SummableGrad} yields
\begin{equation}\label{Eqn:Summabledw}
    \sum\limits_{k=0}^{\infty} \, \alpha_k^{-1}\, \mathbb{E}\left[ \left\| \mathbb{E}_{\xi}\left[\, \mathbf{w}_{k + 1}  - \mathbf{w}_{k} \,\right] \right\|^2_2 \right] < \infty.
\end{equation}
Now note that $\bar{\mathbf{w}}_{k + 1} - \bar{\mathbf{w}}_{k}  =  \frac{1}{n} \left(\mathbf{1}_n \mathbf{1}_n^\top \otimes I_{d_w} \right) \left(\mathbf{w}_{k + 1} - \mathbf{w}_{k}\right)$. Thus
$\mathbb{E}_{\xi}\left[\ \bar{\mathbf{w}}_{k + 1} - \bar{\mathbf{w}}_{k} \right]  =  \frac{1}{n} \left(\mathbf{1}_n \mathbf{1}_n^\top \otimes I_{d_w} \right) \mathbb{E}_{\xi}\left[\ \mathbf{w}_{k + 1} - \mathbf{w}_{k}\right]$ a.s. and $\left\| \mathbb{E}_{\xi}\left[\ \bar{\mathbf{w}}_{k + 1} - \bar{\mathbf{w}}_{k} \right] \right\|_2 \leq  \left\| \mathbb{E}_{\xi}\left[\ \mathbf{w}_{k + 1} - \mathbf{w}_{k}\right] \right\|_2$ a.s.
%\begin{align*}
%   \left\| \mathbb{E}_{\xi}\left[\ \bar{\mathbf{w}}_{k + 1} - \bar{\mathbf{w}}_{k} \right] \right\|_2
%   %&=  \left\| \frac{1}{n} \left(\mathbf{1}_n \mathbf{1}_n^\top \otimes I_{d_w} \right) \mathbb{E}_{\xi}\left[\ \mathbf{w}_{k + 1} - \mathbf{w}_{k}\right] \right\|_2 \quad \textnormal{a.s.}\\
%   &\leq  \left\| \mathbb{E}_{\xi}\left[\ \mathbf{w}_{k + 1} - \mathbf{w}_{k}\right] \right\|_2 \quad \textnormal{a.s.}
%\end{align*}
Therefore it follows from \eqref{Eqn:Summabledw} that
\begin{equation}\label{Eqn:Summabledwbar}
    \sum\limits_{k=0}^{\infty} \, \alpha_k^{-1}\, \mathbb{E}\left[ \left\| \mathbb{E}_{\xi}\left[\, \bar{\mathbf{w}}_{k + 1}  - \bar{\mathbf{w}}_{k} \,\right] \right\|^2_2 \right] < \infty.
\end{equation}
From \eqref{Eq:DSG2} we have
\begin{align}\label{Eq:DSGbar}
  \mathbb{E}_{\xi}\left[\, \bar{\mathbf{w}}_{k + 1}  - \bar{\mathbf{w}}_{k} \,\right]
  &= - \alpha_k \overline{\nabla F} (\mathbf{w}_k) \quad \textnormal{a.s.}
\end{align}
Now substituting \eqref{Eq:DSGbar} into \eqref{Eqn:Summabledwbar} yields \eqref{Eqn:SummableDFbar}.

\subsection{Proof of Theorem~\ref{Theorem:OptCond}}\label{Appdx:F}
Define $G(\mathbf{w}_k) \triangleq \left\| \overline{\nabla F} (\mathbf{w}_k) \right\|^2_2$. Thus we have
%\begin{align}
%    G(\mathbf{w}_k) = \overline{\nabla F} (\mathbf{w}_k)^\top \overline{\nabla F} (\mathbf{w}_k) = \nabla F(\mathbf{w}_k)^\top \mathcal{J}\, \nabla F(\mathbf{w}_k),
%\end{align}
%and
\begin{align}
    \nabla G(\mathbf{w}_k) &= 2\nabla^2 F(\mathbf{w}_k) \mathcal{J}\, \nabla F(\mathbf{w}_k),\label{GradG}
\end{align}
where $\mathcal{J}\, = \left( \frac{1}{n} \left(\mathbf{1}_n \mathbf{1}_n^\top \otimes I_{d_w} \right) \right)$ and $\mathcal{J}^2 = \mathcal{J}\,$. Since $F(\cdot)$ is twice continuously differentiable and $\nabla F(\cdot)$ is Liptschitz continuous with constant $L$, we have $ \nabla^2 F(\mathbf{w}) \leq L I_{nd_w}$. Therefore $\forall \,\mathbf{w}_a,\,\mathbf{w}_b\in\mathbb{R}^{n d_w}$,
\begin{align*}
    &\nabla G(\mathbf{w}_{a}) - \nabla G(\mathbf{w}_b) = 2\nabla^2 F(\mathbf{w}_{a}) \mathcal{J}\, \nabla F(\mathbf{w}_{a}) - 2\nabla^2 F(\mathbf{w}_b) \mathcal{J}\, \\
    &\times \nabla F(\mathbf{w}_b) +     2\nabla^2 F(\mathbf{w}_{a}) \mathcal{J}\, \nabla F(\mathbf{w}_{b}) - 2\nabla^2 F(\mathbf{w}_{a}) \mathcal{J}\, \nabla F(\mathbf{w}_{b}) \\
    & = 2\nabla^2 F(\mathbf{w}_{a}) \mathcal{J}\, \left(\nabla F(\mathbf{w}_{a}) - \nabla F(\mathbf{w}_{b}) \right) \\
    & \qquad\qquad\qquad + 2\left( \nabla^2 F(\mathbf{w}_{a}) - \nabla^2 F(\mathbf{w}_{b}) \right) \mathcal{J}\, \nabla F(\mathbf{w}_{b})
\end{align*}
Since $\nabla^2 F(\mathbf{w}_{a})$ is Lipschitz continuous with constant $L_H$, and $\nabla F(\mathbf{w}_{b}) \leq \mu_F$, we have
\begin{align*}
    &\left\| \nabla G(\mathbf{w}_{a}) - \nabla G(\mathbf{w}_b) \right\|_2 \leq 2L^2 \left\| \mathbf{w}_{a} - \mathbf{w}_{b} \right\|_2 \\
    &\qquad\qquad\qquad  + 2\mu_F L_H \left\| \mathbf{w}_{a} - \mathbf{w}_{b} \right\|_2\leq L_G \left\| \mathbf{w}_{a} - \mathbf{w}_{b} \right\|_2,
\end{align*}
where $L_G \geq 2L^2 + 2\mu_F L_H$. Thus $\nabla G(\mathbf{w})$ is Lipschitz continuous and from Lemma~\ref{Lemma:Lipz} we have
\begin{align*}
    G(\mathbf{w}_{k+1}) \leq  G(\mathbf{w}_k) &+ \nabla G(\mathbf{w}_k)^\top \left( \mathbf{w}_{k+1} - \mathbf{w}_{k} \right)\\ &+ \frac{1}{2}L_G \left\| \mathbf{w}_{k+1} - \mathbf{w}_{k} \right\|_2^2
\end{align*}
Now substituting \eqref{GradG} and taking the conditional expectation $\mathbb{E}_{\xi}[\,\cdot\,]$ yields
\begin{align*}
    \mathbb{E}_{\xi}\left[\,G(\mathbf{w}_{k+1}) \,\right] \leq  &G(\mathbf{w}_k) + \frac{1}{2}L_G \mathbb{E}_{\xi}\left[\,\left\| \mathbf{w}_{k+1} - \mathbf{w}_{k} \right\|_2^2 \,\right] \\
    &\quad + 2\nabla F(\mathbf{w}_k)^\top \mathcal{J}\, \nabla^2 F(\mathbf{w}_k) \mathbb{E}_{\xi}\left[\, \mathbf{w}_{k+1} - \mathbf{w}_{k} \,\right]
\end{align*}
Since $\nabla F(\mathbf{w}_k)^\top \mathcal{J}\, = \nabla V(\gamma_k,\mathbf{w}_k)^\top \mathcal{J}$, substituting \eqref{Eq:DSG2b} yields
\begin{align*}
    &\mathbb{E}_{\xi}\left[\,G(\mathbf{w}_{k+1}) \,\right] \leq  G(\mathbf{w}_k)  + \frac{1}{2}L_G \mathbb{E}_{\xi}\left[\,\left\| \mathbf{w}_{k+1} - \mathbf{w}_{k} \right\|_2^2 \,\right] \\
    &\qquad \qquad \qquad - 2\alpha_k \,\nabla V(\gamma_k,\mathbf{w}_k)^\top \mathcal{J}\, \nabla^2 F(\mathbf{w}) \nabla V(\gamma_k,\mathbf{w}_k)\\
    &\leq G(\mathbf{w}_k) + 2\alpha_k L \left\| \nabla V(\gamma_k,\mathbf{w}_k) \right\|^2_2 + \frac{1}{2}L_G \mathbb{E}_{\xi}\left[\,\left\| \mathbf{w}_{k+1} - \mathbf{w}_{k} \right\|_2^2 \,\right]
\end{align*}
Now taking the total expectation yields
\begin{align}\label{Eqn:DiffG}
\begin{split}
    \mathbb{E}\left[\,G(\mathbf{w}_{k+1}) \,\right]
    \leq \mathbb{E}\left[\, G(\mathbf{w}_k) \,\right] &+ 2\alpha_k L \mathbb{E}\left[\,\left\| \nabla V(\gamma_k,\mathbf{w}_k) \right\|^2_2 \,\right] \\&+ \frac{1}{2}L_G \mathbb{E}\left[\,\left\| \mathbf{w}_{k+1} - \mathbf{w}_{k} \right\|_2^2 \,\right]
\end{split}
\end{align}
From \eqref{Eqn:SummableGrad} and \eqref{InEq:DSG3}, we know that $\alpha_k \mathbb{E}\left[\,\left\| \nabla V(\gamma_k,\mathbf{w}_k) \right\|^2_2 \,\right]$ and $\mathbb{E}\left[\,\left\| \mathbf{w}_{k+1} - \mathbf{w}_{k} \right\|_2^2 \,\right]$ are summable. Therefore \eqref{Eqn:DiffG} can be written in the form of \eqref{Eq:Robbins} and it follows from Lemma~\ref{Lemma:Robbins} that $\mathbb{E}\left[\, G(\mathbf{w}_k) \,\right]$ converges. Since $ \mathbb{E}\left[\, G(\mathbf{w}_k) \,\right] = \mathbb{E}\left[\, \left\| \overline{\nabla F} (\mathbf{w}_k) \right\|^2_2 \,\right]$ it follows from Theorem~\ref{Theorem:SummableGrad} that $\mathbb{E}\left[\, G(\mathbf{w}_k) \,\right]$ must converge to zero.

\bibliography{Biblio}
\bibliographystyle{IEEEtran}

\end{document}